\documentclass[11pt]{amsart}

\usepackage[margin=1.0in]{geometry}

\usepackage{amsmath,amssymb,amsthm}
\usepackage[hidelinks]{hyperref}

\numberwithin{equation}{section}

\newtheorem{theorem}{Theorem}[section]
\newtheorem{proposition}[theorem]{Proposition}
\newtheorem{lemma}[theorem]{Lemma}

\newcommand{\R}{\mathbb R}
\newcommand{\E}{\mathbb E}
\newcommand{\Prob}{\mathbb P}
\newcommand{\cE}{\mathcal E}
\newcommand{\cH}{\mathcal H}
\newcommand{\cN}{\mathcal N}
\newcommand{\ip}[2]{\langle #1,#2\rangle}
\newcommand{\norm}[1]{\lVert #1\rVert}
\newcommand{\one}{\mathbf 1}
\newcommand{\Qn}{[-1,1]^n}
\newcommand{\Vn}{\{-1,1\}^n}
\newcommand{\Sn}{S^{n-1}}
\newcommand{\conv}{\operatorname{conv}}
\newcommand{\Var}{\operatorname{Var}}
\newcommand{\rate}{\mathcal I}

\title[Log-Free Facets of $0/1$-Polytopes]
{A Log-Free Lower Bound for the Number of Facets of $0/1$-Polytopes}
\author{Omer Friedland}
\address{Institut de Math\'ematiques de Jussieu-Paris Rive Gauche, Sorbonne Universit\'e, Paris, France}
\email{omer.friedland@sorbonne-universite.fr}
\date{}

\subjclass[2020]{52B05, 52A22, 60D05, 05D40}
\keywords{$0/1$-polytope, random polytope, facets, large deviations,
VC dimension}

\begin{document}

\begin{abstract}
Let $g(n)$ denote the largest number of facets of a full-dimensional
$0/1$-polytope in $\R^n$. We prove that there are absolute constants
$c>0$ and $n_0$ such that
$$
g(n)\ge(cn)^{n/2}\quad(n\ge n_0).
$$
This removes the logarithmic factor from the lower bound
$(cn/\log n)^{n/2}$ of Gatzouras, Giannopoulos, and Markoulakis. The proof
compares a random sign polytope with two Rademacher rate bodies separated by
a fixed level gap. On a flat patch of the outer body, nonpenetrating facets
have uniformly bounded footprints, so the main obstruction comes from facets
whose outer halfspaces penetrate the inner body. Such a facet forces an empty
buffered discrete cap whose mass is controlled by its penetration depth. For
shallow penetration, likelihood localization confines a comparable part of
the cap to a narrow slab, where a conditional $\varepsilon$-net argument
controls empty halfspace ranges. For deep penetration, the cap is large enough
for a global discretization. These two regimes make the visible penetrating
area exponentially negligible relative to the uncovered outer area, and the
result follows by dividing by the maximal footprint of a nonpenetrating facet.
\end{abstract}

\maketitle

\section{Introduction}

For a full-dimensional polytope $P\subset\R^n$, let $f_{n-1}(P)$ denote
its number of facets, and set
$$
g(n) := \max\{f_{n-1}(P):P = \conv V, V\subset\{0,1\}^n, \dim P = n\}.
$$
The problem of estimating the growth of $g(n)$ was raised by Fukuda and
Ziegler; see \cite{Ziegler}. Fleiner, Kaibel, and Rote proved the factorial
upper bound
$$
g(n)\le 30(n-2)!
$$
for all sufficiently large $n$ \cite{FKR}. The first superexponential lower
bound was obtained by B\'ar\'any and P\'or, who showed that
$$
g(n)\ge\left(\frac{cn}{\log n}\right)^{n/4}
$$
for an absolute constant $c>0$ \cite{BP}. Building on the random-polytope
method originating in the work of Dyer, F\"uredi, and McDiarmid
\cite{DFM}, Gatzouras, Giannopoulos, and Markoulakis improved this first to
$$
g(n)\ge\left(\frac{cn}{\log^2 n}\right)^{n/2}
$$
and then to
$$
g(n)\ge\left(\frac{cn}{\log n}\right)^{n/2};
$$
see \cite{GGM05,GGM07}. The following result removes the remaining logarithmic factor.

\begin{theorem}\label{thm:main}
There are absolute constants $c>0$ and $n_0\in\mathbb N$ such that
$$
g(n)\ge(cn)^{n/2}
$$
for every $n\ge n_0$.
\end{theorem}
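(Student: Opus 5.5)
We follow the random-polytope method of Dyer--F\"uredi--McDiarmid and Gatzouras--Giannopoulos--Markoulakis. Since $\{-1,1\}^n$ is an affine image of $\{0,1\}^n$, it suffices to produce a sign polytope $K_N=\conv\{X_1,\dots,X_N\}$, with $X_i$ independent and uniform on $\Vn$ and $N=e^{\alpha n}$ for a suitable fixed $\alpha$, such that $\E f_{n-1}(K_N)\ge(cn)^{n/2}$. For $x\in\R^n$ let $q(x)=\inf\{\Prob(\ip{X}{\theta}\ge\ip{x}{\theta}):\theta\in\Sn\}$ be the Tukey depth with respect to the Rademacher measure. Let $\rate(x)$ be the Cram\'er rate function, so that $-\log q(x)\approx\rate(x)=\sum_i\psi(x_i)$ with $\psi(t)=\frac{1+t}{2}\log(1+t)+\frac{1-t}{2}\log(1-t)$. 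For levels $\beta_1<\beta_2$ close to $\alpha$, separated by a fixed gap, define the inner and outer rate bodies $F_j=\{x:\rate(x)\le\beta_j n\}$.

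The first step is the standard sandwich. With overwhelming probability $F_1\subset K_N$, by a union bound over a net of directions and the lower large-deviation bound. Also $K_N\subset F_2$ up to an exponentially small surface fraction: each point of $\partial F_2$ is uncovered unless some $X_i$ lies in a cap of mass about $e^{-\beta_2 n}\ll 1/N$. This gives an outer area $|\partial F_2\setminus K_N|\ge(1-o(1))|\partial F_2|$, meaning the uncovered outer area is almost everything. Radially projecting $\partial K_N$ onto $\partial F_2$, every uncovered point is visible through some facet of $K_N$. Hence
\[
f_{n-1}(K_N)\ \ge\ \frac{|\partial F_2|-\text{(area seen through penetrating facets)}}{\max\{\text{footprint of a nonpenetrating facet}\}}.
\]

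The second step bounds the footprints. Restrict to a flat patch of $\partial F_2$, namely points whose coordinates are bounded away from $\pm1$ in a typical way, which carries a constant fraction of the area. On this patch the second fundamental form of $F_2$ is uniformly comparable to the identity at scale $\sqrt n$. A facet whose supporting halfspace does not meet $F_1$ cuts $\partial F_2$ in a cap of depth $O(\beta_2-\beta_1)$ in rate units, and its radial footprint therefore has area at most $(C/n)^{(n-1)/2}|\partial F_2|$, up to $e^{O(n)}$ factors that we absorb into $c$. This is exactly where the log is saved. Earlier arguments controlled the facet footprint through the diameter of the point cloud near the boundary, which costs a $\log n$. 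Here we use only the geometry of the two convex bodies.

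The main obstacle is the third step: showing that facets whose outer halfspace $H$ penetrates $F_1$ see only an exponentially negligible area. Such a facet leaves the open halfspace $H^+$ free of sample points, so the discrete cap $\Vn\cap H^+$, buffered slightly so that it is stable under the net discretization, must be empty. Its Rademacher mass is at least $e^{-(\beta_1-\delta)n}$, where $\delta$ is the penetration depth. We split into two regimes. For deep penetration, $\delta\ge\delta_0$, the cap has mass $\gg 1/N$. A global discretization of halfspaces, of size $e^{O(n)}$ after quotienting by the VC-type counting of sign patterns, together with the bound $(1-e^{-(\beta_1-\delta_0)n})^N$, kills all such caps with probability $1-e^{-e^{cn}}$. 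For shallow penetration we cannot afford a global net. Instead we use likelihood localization: by tilting the measure in the direction of the normal of $H$, a comparable part of the cap's mass lies in a slab of width $O(1)$ near the hyperplane. Conditioning on the number of sample points in the slab, those points are i.i.d.\ from the tilted conditional law. An $\varepsilon$-net theorem for halfspace ranges, with VC dimension $n+1$, then shows that every halfspace range of conditional mass $\gtrsim n\log(1/\varepsilon)/m$ is hit. Calibrating $\varepsilon$ against the penetration depth makes the expected visible area through penetrating facets at most $e^{-cn}|\partial F_2|$. I expect the delicate part to be making the conditional net bound uniform across penetration depths without reintroducing a $\log n$. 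The plan is to exploit the fixed level gap, so that each dyadic depth layer contributes geometrically decreasing area. Combining the three steps and choosing $n_0$ large yields $g(n)\ge(cn)^{n/2}$.
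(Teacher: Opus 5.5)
Your architecture (outer patch, strong-convexity footprint bound for nonpenetrating facets, deep/shallow treatment of penetrating facets) matches the paper's. However, your first step is self-contradictory, and the quantitative balance the proof rests on is missing.

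To reach $(cn)^{n/2}$, the gap between the levels must be $O(1)$ in rate units. A fixed gap in $\beta$ makes the footprint $(C\Delta)^{(n-1)/2}\omega_{n-1}$ comparable to the whole patch, leaving only $e^{O(n)}$ facets from the area count. A tangent cap at level $t$ in a flat direction has mass $\asymp e^{-t}/\sqrt n$, so its expected number of sample points changes only by a factor $e^{O(1)}$ between the two levels. Hence ``$F_1\subset K_N$ with overwhelming probability'' (which needs this occupancy to tend to infinity, indeed to be $\gg n\log n$ for a union bound) and ``all but an $o(1)$ fraction of $\partial F_2$ is uncovered'' (which needs it to tend to $0$) cannot both hold. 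If the first held, there would be no penetrating facets at all. If the second held, shallow penetrating caps would have occupancy $o(1)$ and would typically be empty. Then no argument that excludes penetrating facets by showing their buffered caps are hit can work, since the $\varepsilon$-net step needs occupancy at least of the order of the VC dimension $n+1$.

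The footprint bound is also not where the logarithm is saved: the paper imports it from GGM as Lemma~\ref{lem:geometry}(i). The gain comes from abandoning the inclusion of the inner body, which is what makes an $O(1)$ gap admissible. The paper calibrates $t_\pm=\alpha n-\frac32\log n\pm B$. The $1/\sqrt n$ prefactor you drop, together with the requirement that occupancy be of order $n$, is what produces the $-\frac32\log n$; the $(1+d)^{-3/2}$ loss in \eqref{eq:cap-depth} must also be tracked. With this calibration, penetrating buffered caps have occupancy $\gtrsim ne^{B+s}/(1+B+s)^{3/2}$. Meanwhile a point of the outer patch is uncovered with probability only about $\exp(-C_{\rm out}ne^{-B})$, which is exponentially small rather than $1-o(1)$. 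The proof closes only because the pointwise visibility bound $e^{-c_{\rm pen}n}$ has $c_{\rm pen}=\frac{a_2}{4}e^B/B^{3/2}$ with $a_2$ independent of $B$, so that $c_{\rm pen}>C_{\rm out}e^{-B}$ for large $B$. Your bound ``$e^{-cn}|\partial F_2|$'' with unspecified $c$ hides exactly this comparison of exponents.

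The shallow-regime localization is also mis-specified. To condition on the sample points in a slab and run an $\varepsilon$-net argument, the slab must be fixed before sampling. The paper works pointwise in the outer normal $u$ and uses the likelihood slab $W_k=\{|\ell_{x(u)}|\le C_1(2B+k+2)\}$ anchored at the visibility point $x(u)$. Its mass, at most $C(1+L)e^{L-t_+}/\sqrt n$, brings the entropy cost down to $n\cdot O(B+k)$, compared with $\frac32n\log n$ for the global net. The conditional law is simply $\mu_n(\cdot\mid W_k)$, not a tilted law. A slab ``near the hyperplane of $H$'' obtained by tilting along the normal of $H$ moves with $H$, so there is neither a fixed conditional sample nor any entropy reduction.

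The real content is then estimate \eqref{eq:cap-local} of Proposition~\ref{prop:cap}. Every halfspace containing $x(u)$ whose contact level lies $d$ below $\rate(x(u))$ puts mass $\gtrsim e^{-t_H}/(\sqrt n(1+d)^{3/2})$ of its buffered cap inside $\{|\ell_{x(u)}|\le C(1+d)\}$. Proving this needs three ingredients, none of which appears in your sketch:
\begin{enumerate}
\item the rate projection $y$ onto $H\cap\gamma\Qn$, with $\norm{x-y}_2^2\le2d$ and at most $O(d)$ coordinates pinned at $\pm\gamma$;
\item a joint window estimate for $T$ and $W=\sum_i(\lambda(x_i)-\lambda(y_i))(X_i-y_i)$, via exponential tilting and Markov's inequality;
\item a Bregman-divergence bound.
\end{enumerate}

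Finally, the global discretization is not of size $e^{O(n)}$. Sauer's lemma on the $N=e^{\alpha n}$ sample points bounds the traces only by $e^{O(n^2)}$, and the paper's net of normals and thresholds has $n^{\Theta(n)}$ elements. This is why the paper's deep regime starts only where the occupancy exceeds $n\log n$, namely at depth $s\ge2\log\log n$.
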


In logarithmic form, Theorem~\ref{thm:main} gives
$$
\log g(n)\ge\frac n2\log n-O(n),
$$
whereas the factorial upper bound gives
$$
\log g(n)\le n\log n-n+O(\log n).
$$
Thus, at the time of the first version of this paper, the leading coefficient of $n\log n$ remained between $1/2$ and $1$. Subsequently, Castillo and Ferroni \cite{CF26} obtained a substantially stronger deterministic lower bound which implies
$$
\log g(n)\ge n\log n-n-O((\log n)^2).
$$
Together with the known upper bound, their result yields
$$
\log g(n)=n\log n-n+O((\log n)^2).
$$
Their result therefore quantitatively supersedes Theorem~\ref{thm:main}. The present paper gives a different probabilistic approach, continuing the random-polytope method of \cite{BP,GGM05,GGM07}.

\subsection*{Proof strategy}

We pass affinely from the $0/1$ cube to the sign cube $\Vn = \{-1,1\}^n$.
For the Rademacher rate function
$$
I(s) = \frac12((1+s)\log(1+s)+(1-s)\log(1-s)), \quad \rate(x) = \sum_{i = 1}^n I(x_i),
$$
let $\cE_t = \{x\in[-1,1]^n:\rate(x)\le t\}$. We take
$N = \lfloor e^{\alpha n}\rfloor$ independent uniform sign vectors and let
$P_N$ be their convex hull. The relevant levels are
$$
t_\pm = \alpha n-\frac32\log n\pm B,
$$
where $\alpha>0$ is small and $B>0$ is a sufficiently large fixed
constant.

A flat patch of $\partial\cE_{t_+}$ has area of order
$c^n n^{(n-1)/2}|S^{n-1}|$. The probability of a tangent discrete cap is
at most $Ce^{-t_+}/\sqrt n$, so an $\exp(-Cne^{-B})$ fraction of this
patch remains outside $P_N$ in expectation. A facet whose outer halfspace
misses the truncated inner body $\cE_{t_{-}}\cap\gamma[-1,1]^n$ has footprint
at most $(6B)^{(n-1)/2}|S^{n-1}|$ on the outer patch. Consequently, the
only obstruction to dividing uncovered area by this footprint is a facet
whose outer halfspace enters the inner body; we call such a facet
\emph{penetrating}.

A penetrating halfspace is assigned a minimum rate level $t_H$ and a
buffered cap $C_H\subset\Vn$. If its penetration depth is
$s = t_{-}-t_H$, then
$$
N\mu_n(C_H)\gtrsim n\frac{e^{B+s}}{(1+B+s)^{3/2}}.
$$
For shallow penetration, this occupancy is too small to absorb a global
normal-net cost of order $n\log n$. The main new estimate places a
comparable part of $C_H$ inside a narrow likelihood slab. Conditioning on
that slab reduces the trace entropy, and a conditional $\varepsilon$-net
argument, in the spirit of \cite{HW}, excludes shallow penetrating facets.
For $s\ge2\log\log n$, the cap occupancy dominates the global entropy, so
a direct discretization of normals and thresholds excludes deep penetrating
facets. The expected area visible through penetrating facets is therefore
negligible compared with the uncovered area, and the required facet count
follows.

The proof of Theorem~\ref{thm:main} is completed in
Section~\ref{sec:main-proof}, assuming three quantitative propositions.
The outer-patch estimate is proved in Section~\ref{sec:outer-proof};
penetrating facets are treated in Section~\ref{sec:penetrating}; and the
buffered-cap estimate is proved in Section~\ref{sec:cap-proof}. The two
sampling estimates are included in Appendix~\ref{sec:sampling}.

\subsection*{Disclosure of AI assistance}
During the development of the final argument, the author used ChatGPT, developed by OpenAI, to explore and test possible approaches for closing the argument, and this assistance helped complete the proof. ChatGPT also provided substantial assistance with the organization and drafting of the manuscript. The problem, the overall project, and substantial prior progress, including several partial approaches and intermediate drafts, predate this AI-assisted phase. The author independently verified all mathematical statements and arguments and takes full responsibility for the paper.

\section{Rate bodies and basic inputs}

All logarithms are natural. Constants denoted by $c,C>0$ may change from
line to line. Subscripted constants remain fixed within the argument in which
they are introduced. Parameters are chosen in the order
$$
\gamma,\quad \alpha,\quad \text{ the constants in the auxiliary estimates},\quad B,\quad n_0.
$$
After these choices, every constant is absolute. The affine map
$x\mapsto(x+\one)/2$ preserves face lattices, so it is enough to construct
a full-dimensional polytope with vertices in $\Vn$.

Let $\mu_A$ be the uniform probability measure on $\{-1,1\}^A$. For
$y = (y_i)_{i\in A}\in(-1,1)^A$, let $\Prob_y^A$ be the product law with
coordinate means $y_i$, and write $\E_y^A$ and $\Var_y^A$ for expectation
and variance under $\Prob_y^A$. Define
$$
I(s) := \frac12((1+s)\log(1+s)+(1-s)\log(1-s)), \quad -1\le s\le1,
$$
with $0\log0 := 0$, and put
$$
\lambda(s) := I'(s) = \operatorname{artanh}s, \quad I''(s) = \frac1{1-s^2}\ge1.
$$
For $y\in(-1,1)^A$ and $z\in\{-1,1\}^A$, write
$$
\rate_A(y) := \sum_{i\in A}I(y_i), \quad \ell_y^A(z) := \sum_{i\in A}\lambda(y_i)(z_i-y_i).
$$
When $A = [n]$, the superscript is omitted; when $A = \{1,\dots,r\}$, we
write $\mu_r$, $\Prob_y$, $\E_y$, and $\Var_y$. Empty coordinate blocks
are interpreted in the natural way. The likelihood identity is
\begin{align}
\frac{d\mu_A}{d\Prob_y^A}(z) = \exp\{-\rate_A(y)-\ell_y^A(z)\}.
\end{align}
For $0<t<n\log2$, define the rate body
$$
\cE_t := \{x\in\Qn:\rate(x)\le t\}.
$$
Let $\cH^{n-1}$ denote $(n-1)$-dimensional Hausdorff measure and let
$\Phi$ be the standard normal distribution function. For a convex body $K$,
let $S_K$ be its surface-area measure on $\Sn$, and write
$$
h_K(v) := \sup_{z\in K}\ip{v}{z}, \quad \omega_{n-1} := \cH^{n-1}(\Sn).
$$
For a closed convex set $K$ and $y\in K$, our normal-cone convention is
$$
N_K(y) := \{v:\ip{v}{z-y}\le0\text{ for every }z\in K\}.
$$
The function $\rate$ is smooth and strictly convex on $(-1,1)^n$.

We use the following rate-body geometry from
Gatzouras, Giannopoulos, and Markoulakis \cite[Lemma~2.4 and the proof of
Proposition~3.1]{GGM07}. Their level is $\rate/n$; the statement below is
written in the unnormalized level $t = \rate$. In their notation, the normal
set used in the proof of Proposition~3.1 is the set $\Sigma$ below. The
support-point containment and the area estimate follow from that proof by
combining its flat-direction measure bound with its curvature estimate.

\begin{lemma}[Rate-body geometry]\label{lem:geometry}
There are absolute constants $\gamma_*\in(0,1)$ and $r>1$ with the following
properties.

\emph{(i)} If $0<t<t+\Delta<n\log2$, $0<\gamma\le\gamma_*$, and a closed
halfspace $H$ satisfies
$$
\operatorname{int}H\cap(\cE_t\cap\gamma\Qn) = \varnothing,
$$
then
\begin{align}
\cH^{n-1}(\partial\cE_{t+\Delta}\cap\gamma\Qn\cap H) \le(3\Delta)^{(n-1)/2}\omega_{n-1}.
\end{align}

\emph{(ii)} Put
$$
\Sigma := \left\{u\in\Sn:\norm{u}_\infty\le\sqrt{\frac rn}\right\}.
$$
For every fixed $\eta\in(0,\gamma_*)$ there is $b_\eta>0$ such that, for
all sufficiently large $n$ and all $0<t\le b_\eta n$, the unique support
point $x(u,t)$ of $\cE_t$ in every direction $u\in\Sigma$ belongs to
$\eta\Qn$, the map $u\mapsto x(u,t)$ is continuous on $\Sigma$, and
\begin{align}
S_{\cE_t}(\Sigma) \ge e^{-n/2}(1-\eta^2)^{n-1}(2t)^{(n-1)/2}\omega_{n-1}.
\end{align}
\end{lemma}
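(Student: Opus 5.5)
The lemma is essentially a transcription of \cite[Lemma~2.4 and the proof of Proposition~3.1]{GGM07} into the unnormalized level $t=\rate$. I would prove each part from the local quadratic structure of $\rate$ near the origin, which is what drives their argument.

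For (i), the key fact is the Hessian bound on $\gamma\Qn$: there $\nabla^2\rate=\operatorname{diag}(1/(1-x_i^2))$ lies between $I$ and $(1-\gamma^2)^{-1}I$.

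1. Fix a point $y\in\partial\cE_{t+\Delta}\cap\gamma\Qn\cap H$ and write $H=\{\ip{v}{x}\ge a\}$ with $\norm{v}=1$.
2. Since $\operatorname{int}H$ misses $\cE_t\cap\gamma\Qn$, minimize $\rate$ over $H\cap\gamma\Qn$; let $x_0$ be the minimizer. If $\gamma_*$ is small, $x_0$ lies on the hyperplane $\partial H$ and has $\rate(x_0)\ge t$.
3. Convexity and $I''\ge1$ give strong convexity along the segment from $x_0$ to $y$, which stays in $H\cap\gamma\Qn$. This yields $\rate(y)\ge\rate(x_0)+\tfrac12\norm{y-x_0}^2$, hence $\norm{y-x_0}^2\le2\Delta$.
4. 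The region $\partial\cE_{t+\Delta}\cap\gamma\Qn\cap H$ is therefore contained in a ball of radius $\sqrt{2\Delta}$ around $x_0$.
5. It is also a piece of the boundary of a convex set, cut off by the halfspace $H$, so it lies on the boundary of the convex body $\cE_{t+\Delta}\cap H$ inside that ball. By monotonicity of surface area under inclusion of convex bodies, its area is at most the area of a sphere of radius comparable to $\sqrt{2\Delta}$.
6. With the slack in the constant ($3$ rather than $2$), this gives the bound $(3\Delta)^{(n-1)/2}\omega_{n-1}$.

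For (ii), I would proceed in three steps.

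1. The support point $x(u,t)$ satisfies $\lambda(x_i)=\theta u_i$ for a Lagrange multiplier $\theta$. For $u\in\Sigma$ and $t\le b_\eta n$, each $\theta|u_i|$ is at most about $\sqrt{2t}\sqrt{r/n}\le\sqrt{2rb_\eta}$, using $\rate\approx\tfrac12\norm{x}^2$. Choosing $b_\eta$ small puts every $|x_i|\le\eta$.
2. Uniqueness and continuity follow from strict convexity and the implicit function theorem.
3. The surface-area bound $S_{\cE_t}(\Sigma)=\int_\Sigma\det(\text{inverse Gauss map Jacobian})\,du$ uses the curvature estimate. The principal radii of curvature at $x$ are at least $\norm{\nabla\rate(x)}$ times the eigenvalues of the inverse Hessian. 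Those eigenvalues are at least $1-\eta^2$ on $\eta\Qn$, and $\norm{\nabla\rate}\gtrsim\sqrt{2t}$. This gives a Jacobian of at least $((1-\eta^2)\sqrt{2t})^{n-1}$ up to an $e^{-O(1)n}$ loss coming from $\norm{\nabla\rate}$ versus $\sqrt{2t}$.
4. It remains to combine this with the flat-direction measure bound $\sigma(\Sigma)\ge e^{-cn}\omega_{n-1}$ for $r>1$ large, a standard concentration fact for $\norm{g}_\infty$ with $g$ Gaussian. Fixing $r$ so that the total loss is at most $e^{-n/2}$ gives the stated inequality.

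The main obstacle is step 5 of (i): controlling the area of a boundary patch of $\cE_{t+\Delta}$ rather than a cap of a ball. This needs the monotonicity of surface area for nested convex bodies, applied to $\cE_{t+\Delta}\cap H\cap B(x_0,\sqrt{2\Delta})$, together with the truncation to $\gamma\Qn$, where the Hessian comparison holds.
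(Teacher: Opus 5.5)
\textbf{Overall.} The paper does not prove this lemma itself. It imports (i) from \cite[Lemma~2.4]{GGM07}, and it obtains (ii) by combining the flat-direction measure bound with the curvature estimate in the proof of \cite[Proposition~3.1]{GGM07}. Your outline reconstructs exactly that route, but one step of (ii) does not work as written.

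\textbf{Part (i).} Your argument for (i) is correct. Let $x_0$ minimize $\rate$ on $H\cap\gamma\Qn$. Then $\rate(x_0)\ge t$ as soon as $\operatorname{int}H\cap\gamma\Qn\ne\varnothing$; otherwise the set in question is $\cH^{n-1}$-null. The first-order condition at $x_0$ together with $\nabla^2\rate\succeq I_n$ puts the patch inside $B(x_0,\sqrt{2\Delta})$. It also lies on the boundary of the convex set $\cE_{t+\Delta}\cap H\cap B(x_0,\sqrt{2\Delta})$. Monotonicity of surface area then gives even $(2\Delta)^{(n-1)/2}\omega_{n-1}$. Smallness of $\gamma_*$ plays no role there.

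\textbf{Part (ii): the final step.} You allow an unquantified $e^{-O(1)n}$ loss from comparing $\norm{\nabla\rate(x)}$ with $\sqrt{2t}$, and you propose to absorb it by the choice of $r$. But $r$ only affects $\cH^{n-1}(\Sigma)$, which never exceeds $\omega_{n-1}$. So no choice of $r$ can compensate a loss $e^{-Cn}$ with $C\ge 1/2$, and the statement fixes the constant $e^{-n/2}$ with $r$ independent of $\eta$.

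The missing observation is that there is no loss at all. The function $g(s):=\lambda(s)^2-2I(s)$ has $g(0)=0$ and $g'(s)=2\lambda(s)s^2/(1-s^2)$, which has the sign of $s$. Hence $\norm{\nabla\rate(x)}_2^2=\sum_i\lambda(x_i)^2\ge 2\rate(x)=2t$. The product of the principal radii at $x(u,t)$ is then at least $((1-\eta^2)\sqrt{2t})^{n-1}$ exactly. The role of $r$ is only to ensure $\cH^{n-1}(\Sigma)\ge e^{-n/2}\omega_{n-1}$, which a fixed large $r$ does.

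\textbf{Part (ii): the bound on $\theta$.} In step 1 your bound on $\theta$ uses $\rate(x)\approx\frac12\norm{x}_2^2$. This presupposes the smallness of $x$ that you are trying to prove. Argue instead by monotonicity of $F(\theta):=\sum_iI(\tanh(\theta u_i))$. Pick $c$ with $\tanh(c\sqrt r)=\eta$ and put $\theta_0:=c\sqrt n$. For $u\in\Sigma$ we have $|\theta_0u_i|\le c\sqrt r$, and $\frac{d}{ds}I(\tanh s)=s\operatorname{sech}^2 s$. Therefore $F(\theta_0)\ge\frac12\operatorname{sech}^2(c\sqrt r)\,c^2 n=:b_\eta n$. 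So $t\le b_\eta n$ forces $\theta\le\theta_0$, and hence $|x_i|=\tanh(\theta|u_i|)\le\eta$.
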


We also use Berry-Esseen in the form
\begin{align}
\sup_s\left|\Prob\left\{\frac{\sum_i Z_i}{\sigma}\le s\right\} -\Phi(s)\right| \le \frac{C_{\rm BE}}{\sigma^3}\sum_i\E|Z_i|^3, \quad \sigma^2 = \sum_i\E Z_i^2>0,
\end{align}
for independent centered variables $Z_i$; see \cite[Chapter~V]{Petrov}.
Finally, we use the one-sided Montgomery-Smith estimate \cite{MS}: there
is a universal $c_{\rm MS}\ge1$ such that
\begin{align}\label{eq:MS}
\Prob\left\{\sum_i a_iX_i \ge c_{\rm MS}^{-1}t\norm a_2\right\} \ge c_{\rm MS}^{-1}e^{-c_{\rm MS}t^2}
\end{align}
whenever $a$ is a nonzero finite real vector, the $X_i$ are independent
uniform signs, and $0<t\le\norm a_2/\norm a_\infty$.

\section{The proof reduced to penetrating facets}\label{sec:main-proof}

The proof uses two rate levels separated by a fixed amount. The outer level
has a large boundary patch which is not completely covered by a random sign
polytope. A facet whose outer halfspace misses the inner level sees only a
bounded part of that patch. The sole obstruction is therefore a facet whose
outer halfspace enters the inner body; such facets will be called
\emph{penetrating}.

The first proposition supplies the outer patch and the probability estimates
used throughout the proof.

\begin{proposition}[Outer patch]\label{prop:outer}
There are absolute constants
$$
0<\gamma_0<\gamma<\gamma_*,\quad \alpha>0, \quad c_0,C_0,C_{\rm out}>0
$$
with the following property. Fix $B\ge1$ and, for all sufficiently large
$n$, set
\begin{align}\label{eq:levels}
N := \lfloor e^{\alpha n}\rfloor, \quad t_\pm := \alpha n-\frac32\log n\pm B, \quad E_\pm := \cE_{t_\pm}\cap\gamma\Qn.
\end{align}
Let $X^{(1)},\dots,X^{(N)}$ be independent uniform points of $\Vn$ and put
$$
P_N := \conv\{X^{(1)},\dots,X^{(N)}\}.
$$
For $u\in\Sigma$, let $x(u) := x(u,t_+)$ be the support point of $\cE_{t_+}$
in direction $u$, and set
$$
\Gamma_+ := \{x(u):u\in\Sigma\}, \quad A_+ := S_{E_+}(\Sigma) = \cH^{n-1}(\Gamma_+).
$$
Then $x(u)\in\gamma_0\Qn$ for every $u\in\Sigma$, and
\begin{align}\label{eq:outer-area}
A_+\ge c_0^n n^{(n-1)/2}\omega_{n-1}.
\end{align}
Moreover, uniformly for $u\in\Sigma$,
\begin{align}
\mu_n\{z:\ip{u}{z}\ge\ip{u}{x(u)}\} &\le C_0\frac{e^{-t_+}}{\sqrt n}, \label{eq:outer-cap} \\
\mu_n\{z:|\ell_{x(u)}(z)|\le L\} &\le C_0\frac{(1+L)e^{L-t_+}}{\sqrt n} \quad(1\le L\le\sqrt n). \label{eq:outer-window}
\end{align}
If
$$
\Omega_{\rm out} := \{u\in\Sigma:x(u)\notin P_N\}, \quad A_{\rm out} := S_{E_+}(\Omega_{\rm out}),
$$
then
\begin{align}\label{eq:uncovered}
\E A_{\rm out}\ge q_{B,n}A_+, \quad q_{B,n} := \exp(-C_{\rm out}ne^{-B}).
\end{align}
\end{proposition}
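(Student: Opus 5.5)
We pick the constants in the order prescribed in Section~2. First fix $\gamma_0<\gamma<\gamma_*$, say $\gamma_0=\gamma_*/4$ and $\gamma=\gamma_*/2$. Then choose $\alpha>0$ so small that $t_+\le\alpha n+B\le b_{\gamma_0}n$ for large $n$, and apply Lemma~\ref{lem:geometry}(ii) with $\eta=\gamma_0$. This gives $x(u)\in\gamma_0\Qn$ for all $u\in\Sigma$. Near $x(u)$ the boundaries of $E_+$ and $\cE_{t_+}$ agree, because $\gamma_0<\gamma$ and $\cE_{t_+}$ is strictly convex. Hence $S_{E_+}(\Sigma)=S_{\cE_{t_+}}(\Sigma)=\cH^{n-1}(\Gamma_+)$, where the last equality uses that the Gauss map is a bijection $\Gamma_+\to\Sigma$. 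The area bound \eqref{eq:outer-area} then follows from the lemma's lower bound $e^{-n/2}(1-\gamma_0^2)^{n-1}(2t_+)^{(n-1)/2}\omega_{n-1}$, since $t_+\ge \alpha n/2$; take $c_0=e^{-1/2}(1-\gamma_0^2)\sqrt{\alpha}$.

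For the cap estimates \eqref{eq:outer-cap} and \eqref{eq:outer-window}, put $y=x(u)$. Since $y$ is the support point, $u$ is parallel to $\nabla\rate(y)=(\lambda(y_i))_i$. So $\{\ip{u}{z}\ge\ip{u}{y}\}=\{\ell_y(z)\ge0\}$. By the likelihood identity,
\[
\mu_n\{\ell_y\ge0\}=e^{-t_+}\,\E_y\bigl[e^{-\ell_y}\one_{\ell_y\ge0}\bigr].
\]
Under $\Prob_y$ the variable $\ell_y$ is a centered sum of independent terms $\lambda(y_i)(z_i-y_i)$. Because $|y_i|\le\gamma_0$, the variances are comparable to $\lambda(y_i)^2$, and these sum to $\norm{\nabla\rate(y)}_2^2\asymp \rate(y)\asymp \alpha n$ (use $I(s)\asymp s^2\asymp\lambda(s)^2$ on $[-\gamma_0,\gamma_0]$). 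The flatness $u\in\Sigma$ gives $\lambda(y_i)\lesssim\sqrt{r/n}\,\norm{\nabla\rate(y)}_2\lesssim1$. So the third moments total $O(n)$ while $\sigma^3\asymp n^{3/2}$, and Berry--Esseen has error $O(n^{-1/2})$.

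To bound $\E_y[e^{-\ell}\one_{\ell\ge0}]$, split $[0,\infty)$ into unit intervals $[k,k+1)$. Each carries $\Prob_y$-mass at most $C/\sqrt n$: the Gaussian density contributes $O(\sigma^{-1})=O(n^{-1/2})$ and Berry--Esseen contributes another $O(n^{-1/2})$. Summing against $e^{-k}$ gives \eqref{eq:outer-cap}. For the window, apply the same decomposition on $[-L,L]$. The weight $e^{-\ell}\le e^{L}$ and the $\Prob_y$-mass is at most $C(1+L)/\sqrt n$, which gives \eqref{eq:outer-window}. The only point needing care is that the Berry--Esseen error is $O(n^{-1/2})$ uniformly rather than merely $o(1)$; the bounded-coefficient condition coming from $\Sigma$ is exactly what makes this work.

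For \eqref{eq:uncovered}, use Fubini. We have $\E A_{\rm out}=\int_\Sigma\Prob\{x(u)\notin P_N\}\,dS_{E_+}(u)$. Since $x(u)\in P_N$ forces some $X^{(j)}$ into the cap $\{\ip{u}{z}\ge\ip{u}{x(u)}\}$, it suffices to show that no sample point lands in that cap. This event has probability at least
\[
(1-C_0e^{-t_+}/\sqrt n)^N\ge\exp\bigl(-2NC_0e^{-t_+}/\sqrt n\bigr).
\]
With $N\le e^{\alpha n}$ we get $Ne^{-t_+}/\sqrt n\le n^{3/2}e^{-B}/\sqrt n=ne^{-B}$. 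This yields $q_{B,n}$ with $C_{\rm out}=2C_0$; the factor $2$ absorbs $\log(1-p)\ge-2p$ for small $p$. The main obstacle is the uniform local-CLT step. If Berry--Esseen in the stated form proves insufficient for the lower tail near $0$, the fallback is a direct lattice-free local bound: a sum with $\Omega(n)$ coefficients bounded away from $0$ puts mass $O(n^{-1/2})$ on any unit interval, by a Kolmogorov--Rogozin concentration estimate.
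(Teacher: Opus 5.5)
Your proposal is correct and follows the paper's proof essentially step for step. It uses Lemma~\ref{lem:geometry}(ii) for $x(u)\in\gamma_0\Qn$ and the area bound; Berry--Esseen under $\Prob_{x(u)}$, the likelihood identity, and a unit-interval decomposition for \eqref{eq:outer-cap} and \eqref{eq:outer-window}; and $(1-p_+(u))^N\ge e^{-2Np_+(u)}$ with $Np_+(u)\le Cne^{-B}$, integrated over $\Sigma$, for \eqref{eq:uncovered}. The differences are cosmetic: you identify $S_{E_+}$ with $S_{\cE_{t_+}}$ on $\Sigma$ through uniqueness of the support point (using $x(u)\in E_+\subset\cE_{t_+}$) rather than by noting that the truncation faces have normals $\pm e_i\notin\Sigma$, and your choice $\gamma=\gamma_*/2$ suffices for this statement, although the paper also imposes $c_{\rm MS}\lambda(\gamma)\le 1/4$ because the same $\gamma$ is reused in Claim~2.
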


Fix the constants in Proposition~\ref{prop:outer}. Let
$\mathcal D := \{\dim P_N = n\}$. On $\mathcal D$, orient each facet $Q$ of
$P_N$ by its outer unit normal $\nu_Q$ and write
$$
b_Q := h_{P_N}(\nu_Q), \quad H_Q^+ := \{z:\ip{\nu_Q}{z}\ge b_Q\}.
$$
The facet $Q$ is \emph{penetrating} if
$$
\operatorname{int}H_Q^+\cap E_{-}\ne\varnothing,
$$
and it is \emph{visible from} $u\in\Sigma$ if $x(u)\in H_Q^+$. Define
$$
\Omega_{\rm pen} := \{u\in\Sigma:\text{ some penetrating facet is visible from }u\}.
$$
On $\mathcal D$, set $A_{\rm pen} := S_{E_+}(\Omega_{\rm pen})$. On
$\mathcal D^c$, set $\Omega_{\rm pen} := \varnothing$ and
$A_{\rm pen} := A_+$.

\begin{proposition}[Counting reduction]
On $\mathcal D$,
\begin{align}\label{eq:facet-cover}
A_{\rm out} \le A_{\rm pen} +f_{n-1}(P_N)(6B)^{(n-1)/2}\omega_{n-1}.
\end{align}
\end{proposition}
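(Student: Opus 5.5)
The plan is to cover $\Omega_{\rm out}$ by the sets of directions from which individual facets are visible, and then split these facets into penetrating and nonpenetrating ones. Fix $u\in\Omega_{\rm out}$, so $x(u)\notin P_N$. On $\mathcal D$, $P_N$ is a full-dimensional polytope, hence the intersection of the halfspaces $\{z:\ip{\nu_Q}{z}\le b_Q\}$ over its facets $Q$. Since $x(u)$ lies outside $P_N$, it violates at least one of these inequalities, i.e. $x(u)\in\operatorname{int}H_Q^+\subset H_Q^+$ for some facet $Q$. Thus
$$
\Omega_{\rm out}\subset\Omega_{\rm pen}\cup\bigcup_{Q\text{ nonpenetrating}}\Omega_Q,\qquad \Omega_Q:=\{u\in\Sigma:x(u)\in H_Q^+\}.
$$
Since $S_{E_+}$ is a measure, subadditivity gives $A_{\rm out}\le A_{\rm pen}+\sum_{Q}S_{E_+}(\Omega_Q)$, the sum running over nonpenetrating facets. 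There are at most $f_{n-1}(P_N)$ such facets, so it remains to show that $S_{E_+}(\Omega_Q)\le(6B)^{(n-1)/2}\omega_{n-1}$ for each nonpenetrating $Q$.

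For this bound I identify $S_{E_+}(\Omega_Q)$ with a Hausdorff measure on $\partial\cE_{t_+}$. Proposition~\ref{prop:outer} gives $x(u)\in\gamma_0\Qn\subset\operatorname{int}(\gamma\Qn)$ for $u\in\Sigma$. Near such points $E_+$ coincides with $\cE_{t_+}$, whose boundary is smooth and strictly convex there, so the Gauss map is a bijection from $\Gamma_+$ onto $\Sigma$ with inverse $u\mapsto x(u)$. By the definition of the surface-area measure, $S_{E_+}(\Omega)=\cH^{n-1}(\{x(u):u\in\Omega\})$ for Borel $\Omega\subset\Sigma$; this is the identity $A_+=\cH^{n-1}(\Gamma_+)$ already used in the statement. (Measurability of $\Omega_Q$ follows from continuity of $u\mapsto x(u)$.) Hence
$$
S_{E_+}(\Omega_Q)=\cH^{n-1}(\{x(u):u\in\Omega_Q\})\le\cH^{n-1}(\partial\cE_{t_+}\cap\gamma\Qn\cap H_Q^+).
$$

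Finally apply Lemma~\ref{lem:geometry}(i) with $t=t_-$, $\Delta=2B$ and $H=H_Q^+$. Nonpenetration means exactly that $\operatorname{int}H_Q^+\cap(\cE_{t_-}\cap\gamma\Qn)=\varnothing$. The hypotheses hold: $\gamma<\gamma_*$, $t_->0$ for large $n$, and $t_+<n\log2$ because $\alpha$ is small. The lemma yields the bound $(3\cdot2B)^{(n-1)/2}\omega_{n-1}=(6B)^{(n-1)/2}\omega_{n-1}$, which completes the argument.

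The only delicate step is the identification of $S_{E_+}$ restricted to $\Sigma$ with Hausdorff measure on the support points. One must check that the truncation by $\gamma\Qn$ does not contribute normals in $\Sigma$: each $u\in\Sigma$ has its unique support point $x(u)$ in the interior of the cube, so the only boundary points of $E_+$ with normals in $\Sigma$ are the points $x(u)$. After that, the statement reduces to Lemma~\ref{lem:geometry}(i).
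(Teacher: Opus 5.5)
Your proposal is correct and follows the paper's own proof: you cover $\Omega_{\rm out}$ by the visibility sets of violated facets, convert each nonpenetrating facet's visibility set into a footprint on $\partial\cE_{t_+}\cap\gamma\Qn\cap H_Q^+$ via the normal-area identity, and bound that footprint by Lemma~\ref{lem:geometry}(i) with $t=t_-$ and $\Delta=2B$. Beyond the paper's version, you also spell out the covering, subadditivity and measurability steps, and you check that the truncation by $\gamma\Qn$ does not affect the identification of $S_{E_+}$ on $\Sigma$, all of which the paper leaves implicit.
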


\begin{proof}
By the normal-area identity \eqref{eq:normal-area}, every point of
$\Gamma_+\setminus P_N$ violates a facet inequality, and the part visible
through penetrating facets has area at most $A_{\rm pen}$. If
$Q$ is not penetrating, then
$\operatorname{int}H_Q^+\cap E_{-} = \varnothing$. Applying
Lemma~\ref{lem:geometry}(i) with $t = t_{-}$ and
$\Delta = t_+-t_{-} = 2B$ bounds the footprint of $Q$ on $\Gamma_+$ by
$(6B)^{(n-1)/2}\omega_{n-1}$. Summing over the nonpenetrating facets proves
\eqref{eq:facet-cover}.
\end{proof}

The remaining estimate is pointwise in the outer normal.

\begin{proposition}[Penetrating visibility]\label{prop:visibility}
The constant $B$ can be chosen so that, for some absolute
$c_{\rm pen}>C_{\rm out}e^{-B}$, uniformly for $u\in\Sigma$,
\begin{align}\label{eq:visibility}
\Prob\{u\in\Omega_{\rm pen}, \mathcal D\} \le e^{-c_{\rm pen}n}
\end{align}
for all sufficiently large $n$.
\end{proposition}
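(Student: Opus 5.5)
Fix $u\in\Sigma$ and write $x=x(u)$. The plan is to show that any penetrating facet visible from $u$ forces an empty region of $\Vn$ whose $\mu_n$-mass is too large to be missed by all $N$ sample points. We then sum over a discretization of all such halfspaces and split according to penetration depth, exactly as in the proof strategy.

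\textbf{Step 1: the buffered cap.} Let $Q$ be penetrating and visible from $u$, with halfspace $H=H_Q^+$. Set
$$
t_H:=\min\{\rate(z):z\in \operatorname{int}H\cap\gamma\Qn\}\le t_{-}
$$
and $s:=t_{-}-t_H\ge0$. Because $H$ contains $x$, which lies on the level $t_+$ in $\gamma_0\Qn$, and also meets $E_-$, the halfspace cuts a cap of the rate body, oriented in a direction close to a normal of the rate body at the minimizer $y_H$. We take $C_H\subset\Vn$ to be the discrete cap $\{z:\ell_{y_H}(z)\ge \beta\}$, shrunk slightly inside $H$ so that it is robust under perturbation of $(\nu,b)$. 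Since $Q$ is a facet, no $X^{(j)}$ lies in $\operatorname{int}H$, so $C_H$ contains no sample point. Via the likelihood identity and Berry--Esseen, in the form behind \eqref{eq:outer-cap} and \eqref{eq:outer-window}, we aim for the lower bound
$$
N\mu_n(C_H)\gtrsim n\,e^{B+s}(1+B+s)^{-3/2}.
$$
The factor $e^{B+s}$ comes from $e^{\alpha n-t_H}=n^{3/2}e^{B+s}$, times the $1/\sqrt n$ local CLT factor. A further factor $n/(1+B+s)^{3/2}$ comes from the width of the cap in likelihood scale; the level gap between $t_H$ and $t_+$ gives the cap thickness of order $B+s$.

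\textbf{Step 2: deep penetration, $s\ge 2\log\log n$.} Here we discretize normals and thresholds globally. We take a net of $\Sn\times\R$ of cardinality $e^{Cn\log n}$, and each $H$ is approximated by a net element whose cap is contained in $C_H$ and has comparable mass. We apply a union bound over net elements and dyadic ranges of $s$. The probability that a fixed cap of mass $m$ is empty is $(1-m)^N\le e^{-Nm}$. With $Nm\gtrsim n(\log n)^2(\log\log n)^{-3/2}\gg Cn\log n$, the total contribution is $e^{-n}$ or smaller.

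\textbf{Step 3: shallow penetration, $0\le s<2\log\log n$ (main obstacle).} Now $Nm$ is only of order $ne^{B}$ up to polylog factors, which is far below $n\log n$, so a global net fails. The plan is localization. We show that a constant fraction of $C_H$ lies in the narrow slab
$$
W:=\{z:|\ell_{x}(z)|\le L\},\qquad L\asymp B+s,
$$
anchored at the visible point $x$. This uses that $y_H$ is close to $x$ in likelihood coordinates when $s$ is small, together with \eqref{eq:outer-window}. By \eqref{eq:outer-window}, $\mu_n(W)\le C_0(1+L)e^{L-t_+}/\sqrt n$, so the number $M$ of sample points in $W$ is concentrated around $O(n(1+L)e^{L-B})$. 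Conditionally on $M$, these points are i.i.d.\ from $\mu_n(\cdot\mid W)$, and the cap occupies a fraction $\ge \varepsilon:=c\,e^{-L}(\text{poly}(L))^{-1}$ of $W$. By the $\varepsilon$-net theorem of \cite{HW} for halfspaces, which have VC dimension $n+1$, the probability that some halfspace range of conditional mass $\ge\varepsilon$ is empty is at most $\exp(-c\varepsilon M+Cn\log(1/\varepsilon))$. Here $\varepsilon M\gtrsim n\,\text{poly}(L)^{-1}e^{-B}\cdot e^{\text{(gap)}}$, and the trace entropy is only $n\log(1/\varepsilon)=O(n(B+s))$ rather than $n\log n$.

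The delicate point, which I expect to be the hardest, is making the constants cooperate. The exponent must be $\le -c_{\rm pen}n$ with $c_{\rm pen}>C_{\rm out}e^{-B}$. I would try to arrange this by taking $B$ large, so that the conditional occupancy $\gtrsim e^{B/2}$ per dimension beats $\log(1/\varepsilon)\asymp B$, while $C_{\rm out}e^{-B}$ becomes tiny. We then sum over dyadic $s$ and add the probability of atypical $M$, which is bounded by a Chernoff bound. Combining Steps 2 and 3 gives \eqref{eq:visibility} uniformly in $u\in\Sigma$.
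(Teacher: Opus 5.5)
Your architecture matches the paper's. You use shells in $s$, split at $2\log\log n$, and handle deep caps with a buffered global net of size $e^{O(n\log n)}$. For shallow caps you run a conditional $\varepsilon$-net on halfspace traces in the fixed slab $W=\{|\ell_x|\le L\}$, $L\asymp B+s$, with $\mu_n(W)$ bounded by \eqref{eq:outer-window}, trace entropy $O(n(B+s))$ against occupancy $\gtrsim ne^{B+s}/\mathrm{poly}(B+s)$, and $B$ large. That bookkeeping is right. The gap is that both regimes rest on cap lower bounds you only announce (``we aim for''), and the mechanisms you indicate do not produce them.

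First, $t_H$ is a minimum over $\gamma\Qn$, so $y_H$ can have pinned coordinates $|y_i|=\gamma$. There KKT gives $q\nu_i=\lambda(y_i)+\eta_i$ with $\eta_iy_i\ge0$ and otherwise arbitrary. Hence $\nu$ is not parallel to $(\lambda(y_{H,i}))_i$, and $\{\ell_{y_H}\ge\beta\}$ is in general not contained in $H$, so its emptiness is not inherited from the facet. Forcing containment needs $\beta\ge(1+\gamma)\norm{\eta}_1$. All you control is $\norm{\eta}_1\le d/(\gamma-\gamma_0)$, so the factor $e^{-\beta}$ can swamp the gain $e^{B+s}$. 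Intersecting with $H$ instead only relocates the difficulty. You must keep the cap $\{\ip{\nu}{z}\ge b+3/n\}$ and tilt only the interior block. The pinned block, which has arbitrary positive weights, must be bounded below by $c(m+1)^{-3/2}e^{-mI(\gamma)}$; the paper does this with a cyclic-shift count, together with $m\le C_{\rm act}d$ from $\norm{x-y}_2^2\le 2d$. That is the actual source of $(1+B+s)^{-3/2}$: it is the price of pinned coordinates, not a ``cap width''. Also, when $t_H$ is small, as can happen for deep caps, Berry--Esseen gives nothing, and you need an anti-concentration input such as Montgomery--Smith or Paley--Zygmund.

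Second, and more seriously, the localization in Step~3 is unjustified, and it is the main new estimate of the argument. \eqref{eq:outer-window} is an upper bound on $\mu_n(W)$ and cannot place any cap mass inside $W$. What you need is the absolute lower bound \eqref{eq:cap-local}, $\mu_n(C_H\cap W)\gtrsim e^{-t_H}n^{-1/2}(1+d)^{-3/2}$ with $L=C(1+d)$. This is less than a constant fraction of $C_H$, and it is all the paper proves.

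Proximity of $y_H$ to $x$ does not give it by itself. Under $\Prob_y$ the cap mass sits on $A=\{\ell_0\le T_0\le \ell_0+L_0\}$, where $T_0=\sum_{i\notin J}\lambda(y_i)(X_i-y_i)$, and $\Prob_y(A)\asymp n^{-1/2}$. You must show that $W_0=\sum_{i\notin J}(\lambda(x_i)-\lambda(y_i))(X_i-y_i)$ is $O(1+d)$ on a fixed fraction of $A$. Chebyshev with $\Var_y W_0\lesssim d$ only bounds the exceptional set by a constant, which could contain all of $A$, since $W_0$ and $T_0$ are correlated.

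The paper's device is exponential tilting in the directions $\pm w$: $\E_y[e^{\pm W_0}\one_A]=e^{\Psi(\pm1)}\Prob_{\pm}(A)\le Ce^{\norm{w}_2^2/2}n^{-1/2}$. This uses $(\log\cosh)''\le1$ and a Berry--Esseen upper bound for $T_0$ under the tilted product laws. Markov then makes the exceptional part a small fraction of $A$. The remaining terms of $\ell_x$ are controlled by the Bregman bound $|\nabla\rate(x)\cdot(y-x)|\le C(1+d)$ and the trivial $O(m)=O(d)$ bound on the pinned block. Without this joint estimate you have no lower bound on the conditional mass $\varepsilon$. The shallow regime, exactly where the global net fails, is then uncovered.
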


\begin{proof}[Proof of Theorem~\ref{thm:main}]
Choose $B$ as in Proposition~\ref{prop:visibility}. We first record the
elementary estimate
\begin{align}\label{eq:full-dimension}
\Prob(\mathcal D^c)\le2^{n^2-N}.
\end{align}
Indeed, every proper affine hyperplane contains at most $2^{n-1}$ points of
$\Vn$. If the sample is not full dimensional, choose at most $n$ affinely
independent sample points spanning its affine hull and pad the list by
repetitions to obtain an ordered $n$-tuple of cube vertices. There are at
most $2^{n^2}$ such tuples. For each tuple whose affine span is proper, fix a
proper affine hyperplane containing that span; the probability that all $N$
samples lie in this hyperplane is at most $2^{-N}$.

For a fixed sample configuration, $\Omega_{\rm pen}$ is a finite union of
preimages of closed halfspaces under the continuous map $u\mapsto x(u)$.
Thus Tonelli's theorem and Proposition~\ref{prop:visibility} give
$$
\E[A_{\rm pen}\mathbf1_{\mathcal D}] \le e^{-c_{\rm pen}n}A_+.
$$
Together with \eqref{eq:full-dimension},
\begin{align}\label{eq:pen-area}
\E A_{\rm pen} \le(e^{-c_{\rm pen}n}+2^{n^2-N})A_+ = o(q_{B,n}A_+).
\end{align}

Let $Z_N = f_{n-1}(P_N)$ on $\mathcal D$ and $Z_N = 0$ on $\mathcal D^c$.
Since $A_{\rm out}\le A_+$, inequality \eqref{eq:facet-cover} remains true
on $\mathcal D^c$ after replacing $f_{n-1}(P_N)$ by $Z_N$. Taking
expectations and using \eqref{eq:uncovered} and \eqref{eq:pen-area},
$$
\E Z_N(6B)^{(n-1)/2}\omega_{n-1} \ge\frac12q_{B,n}A_+
$$
for all sufficiently large $n$. By \eqref{eq:outer-area},
$$
\E Z_N\ge a^n n^{(n-1)/2}
$$
for an absolute $a>0$. Set $c := a^2/4$. Since $2^n\ge\sqrt n$ for all
large $n$,
$$
(cn)^{n/2} = \frac{a^n n^{n/2}}{2^n} \le a^n n^{(n-1)/2}.
$$
Every full-dimensional sign polytope is affinely equivalent to a
full-dimensional $0/1$-polytope, so $Z_N\le g(n)$ pointwise. Hence
$g(n)\ge\E Z_N\ge(cn)^{n/2}$.
\end{proof}

\section{The outer patch}\label{sec:outer-proof}

\begin{proof}[Proof of Proposition~\ref{prop:outer}]
Choose $0<\gamma<\gamma_*$ so small that
\begin{align}\label{eq:gamma-choice}
c_{\rm MS}\lambda(\gamma)\le\frac14,
\end{align}
and set $\gamma_0 := \gamma/2$. Let $b_{\gamma_0}$ and $r$ be supplied by
Lemma~\ref{lem:geometry}. Choose $\alpha>0$ so small that
$$
4\alpha<\min\{b_{\gamma_0},\log2\}.
$$
For every fixed $B$ and all sufficiently large $n$, the levels $t_\pm$ in
\eqref{eq:levels} belong to $[\alpha n/2,2\alpha n]$. Hence
Lemma~\ref{lem:geometry}(ii) shows that $x(u)\in\gamma_0\Qn$ for
$u\in\Sigma$ and
\begin{align}\label{eq:area-exact}
S_{\cE_{t_+}}(\Sigma) \ge e^{-n/2}(1-\gamma_0^2)^{n-1}(2t_+)^{(n-1)/2}\omega_{n-1}.
\end{align}
Truncation by $\gamma\Qn$ does not alter this patch. Indeed, the patch lies
in $\gamma_0\Qn$, the new coordinate faces have normals
$\pm e_i\notin\Sigma$ for $n>r$, and all remaining intersections with the
coordinate faces are lower dimensional. Therefore
$$
S_{E_+} \restriction_\Sigma = S_{\cE_{t_+}} \restriction_\Sigma.
$$
The relevant portion of the rate boundary is smooth and strictly convex, so
its Gauss map has inverse $u\mapsto x(u)$. Consequently, for every Borel set
$U\subset\Sigma$,
\begin{align}\label{eq:normal-area}
S_{E_+}(U) = \cH^{n-1}(\{x(u):u\in U\}).
\end{align}
Since $t_+\ge\alpha n/2$, \eqref{eq:area-exact} implies
\eqref{eq:outer-area} after decreasing an absolute constant $c_0>0$.

Fix $u\in\Sigma$ and write $x = x(u)$. Smoothness and strict convexity give
$$
\nabla\rate(x) = \theta u
$$
for some $\theta>0$. Put $a_i := \lambda(x_i)$ and
$$
T := \ell_x(X) = \sum_i a_i(X_i-x_i),
$$
where $X$ has law $\Prob_x$. Since $x\in\gamma_0\Qn$ and
$\rate(x) = t_+\asymp n$, the continuous functions
$I(s)$ and $\lambda(s)^2(1-s^2)$ are comparable on
$[-\gamma_0,\gamma_0]$, and hence
\begin{align}
cn\le\sigma_x^2 := \Var_x(T) = \sum_i a_i^2(1-x_i^2)\le Cn.
\end{align}
Moreover,
$$
\sum_i\E_x|a_i(X_i-x_i)|^3 \le2\lambda(\gamma_0)\sigma_x^2\le Cn.
$$
Berry-Esseen therefore implies that every interval $J\subset\R$ of
length $h\ge0$ satisfies
\begin{align}\label{eq:interval-anti}
\Prob_x\{T\in J\}\le C\frac{1+h}{\sqrt n}.
\end{align}

Because $a = \theta u$, the tangent-cap event is $\{T\ge0\}$. By the
likelihood identity,
$$
\mu_n\{T\ge0\} = e^{-t_+}\E_x[e^{-T}\mathbf1_{\{T\ge0\}}] \le e^{-t_+}\sum_{j\ge0}e^{-j}\Prob_x\{j\le T<j+1\}.
$$
Estimate \eqref{eq:interval-anti} proves \eqref{eq:outer-cap}. Likewise,
for $1\le L\le\sqrt n$,
$$
\mu_n\{|T|\le L\} \le e^{L-t_+}\Prob_x\{|T|\le L\},
$$
and \eqref{eq:interval-anti} gives \eqref{eq:outer-window}.

Let $p_+(u)$ be the tangent-cap probability in \eqref{eq:outer-cap}. If no
sample point lies in that cap, then $x(u)\notin P_N$. For all large $n$,
$p_+(u)\le1/2$, and therefore
$$
\Prob\{x(u)\notin P_N\} \ge(1-p_+(u))^N\ge e^{-2Np_+(u)}.
$$
Since $N\le e^{\alpha n}$ and
$\alpha n-t_+ = \frac32\log n-B$,
$$
Np_+(u)\le Cne^{-B}.
$$
Integration over $\Sigma$ against $S_{E_+}$ proves
\eqref{eq:uncovered}.
\end{proof}

\section{Penetrating facets}\label{sec:penetrating}

For $\nu\in\Sn$ and $b\in\R$, write
$$
H^+(\nu,b) := \{z\in\R^n:\ip{\nu}{z}\ge b\}.
$$
A penetrating halfspace is measured by the smallest rate level it reaches
inside the truncated cube.

\begin{lemma}[Rate projection]\label{lem:projection}
Let $x\in\gamma_0\Qn$ satisfy $\rate(x) = t_x$, and suppose that
$H^+(\nu,b)$ contains $x$ and
$$
\operatorname{int}H^+(\nu,b)\cap(\cE_{t_0}\cap\gamma\Qn) \ne\varnothing \quad(0<t_0<t_x).
$$
Set
\begin{align}\label{eq:tH}
t_H := \min\{\rate(y):y\in\gamma\Qn, \ip{\nu}{y}\ge b\}.
\end{align}
Then $0\le t_H<t_0$. If $b\le0$, then $t_H = 0$. If $b>0$, the minimizer
$y$ is unique, lies on $\ip{\nu}{y} = b$, and, with
$$
J := \{i:|y_i| = \gamma\},\quad m := |J|,
$$
one has
\begin{align}
\norm{x-y}_2^2&\le2(t_x-t_H), \label{eq:contact-distance} \\
m&\le\frac{2(t_x-t_H)}{(\gamma-\gamma_0)^2}. \label{eq:active-count}
\end{align}
Moreover, for some $q>0$ and $a = q\nu$,
\begin{align}\label{eq:KKT}
a_i = \lambda(y_i)\quad(i\notin J), \quad a_iy_i>0\quad(i\in J).
\end{align}
\end{lemma}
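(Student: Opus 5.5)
The plan is to treat \eqref{eq:tH} as a strictly convex program over a compact convex feasible set, and then read off each claim from existence, uniqueness, the first-order (KKT) conditions, and the strong convexity $I''\ge1$.

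First, existence and the bound $0\le t_H<t_0$. The feasible set $F:=\gamma\Qn\cap H^+(\nu,b)$ is compact and convex. It is nonempty because it contains a point $z\in\operatorname{int}H^+(\nu,b)\cap\cE_{t_0}\cap\gamma\Qn$. The function $\rate$ is continuous on $\gamma\Qn$, so the minimum is attained, and $t_H\ge0$ because $I\ge0$. For the strict inequality, if $z=0$ then $t_H=0<t_0$. If $z\ne0$, the point $(1-\varepsilon)z$ is still feasible for small $\varepsilon>0$, since $z$ lies in the open halfspace and the cube is star-shaped about $0$. Because $I$ is strictly increasing in $|s|$, we get $\rate((1-\varepsilon)z)<\rate(z)\le t_0$, so $t_H<t_0$. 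If $b\le0$, then $0\in F$ and $t_H=\rate(0)=0$.

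Next assume $b>0$, so that $0\notin F$. Uniqueness of the minimizer $y$ follows from strict convexity of $\rate$ on $\gamma\Qn\subset(-1,1)^n$, where $\rate$ is smooth with Hessian $\ge\mathrm{Id}$. If we had $\ip{\nu}{y}>b$, the same radial shrinking $y\mapsto(1-\varepsilon)y$ would give a feasible point of smaller rate, since $y\ne0$. Hence $\ip{\nu}{y}=b$.

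For \eqref{eq:KKT}, the constraints are affine, so the KKT conditions are necessary without a further qualification; Slater's condition also holds via $z$. They give multipliers $q\ge0$ for $\ip{\nu}{\cdot}\ge b$ and $\kappa_i\ge0$ for the active box constraints $\operatorname{sgn}(y_i)y_i\le\gamma$, $i\in J$, with
$$
\lambda(y_i)=q\nu_i\ (i\notin J),\qquad \lambda(y_i)=q\nu_i-\kappa_i\operatorname{sgn}(y_i)\ (i\in J).
$$
Multiplying by $y_i$ for $i\in J$ gives $q\nu_iy_i=\lambda(y_i)y_i+\kappa_i\gamma>0$, since $\lambda(s)s>0$ for $s\ne0$. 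It remains to show $q>0$. If $q=0$, then $\lambda(y_i)=0$ for $i\notin J$, and for $i\in J$ we get $\lambda(y_i)y_i=-\kappa_i\gamma\le0$, which is impossible. So $J=\varnothing$ and $y=0$, contradicting feasibility. Setting $a=q\nu$ gives \eqref{eq:KKT}.

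For \eqref{eq:contact-distance}, note that $x$ itself is feasible: $x\in H^+(\nu,b)$ and $x\in\gamma_0\Qn\subset\gamma\Qn$. The first-order optimality condition for a convex minimization over the convex set $F$ gives $\ip{\nabla\rate(y)}{x-y}\ge0$. Strong convexity along the segment $[y,x]\subset\gamma\Qn$, using $I''\ge1$, gives
$$
t_x=\rate(x)\ge\rate(y)+\ip{\nabla\rate(y)}{x-y}+\tfrac12\norm{x-y}_2^2\ge t_H+\tfrac12\norm{x-y}_2^2,
$$
which is \eqref{eq:contact-distance}. For \eqref{eq:active-count}, each $i\in J$ has $|y_i|=\gamma$ and $|x_i|\le\gamma_0$, so $|x_i-y_i|\ge\gamma-\gamma_0$. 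Summing over $J$ gives $m(\gamma-\gamma_0)^2\le\norm{x-y}_2^2\le2(t_x-t_H)$.

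The only mildly delicate step is the sign analysis showing $q>0$ together with $a_iy_i>0$ on $J$; everything else is routine convex analysis.
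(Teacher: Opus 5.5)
Your proposal is correct and follows essentially the same route as the paper: radial contraction toward the origin for $0\le t_H<t_0$, strict convexity for uniqueness, KKT for \eqref{eq:KKT} with $q>0$ forced by the infeasibility of the origin, and then $\ip{\nabla\rate(y)}{x-y}\ge0$ together with $I''\ge1$ for \eqref{eq:contact-distance} and \eqref{eq:active-count}. The differences are cosmetic. You get $\ip{\nabla\rate(y)}{x-y}\ge0$ directly from the first-order variational inequality rather than from the KKT decomposition. You also check $q>0$ coordinatewise, instead of noting that $q=0$ would make $y$ the minimizer of $\rate$ on $\gamma\Qn$.
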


\begin{proof}
The feasible set in \eqref{eq:tH} is compact. A point in the strict
intersection can be contracted slightly toward the origin while remaining
strictly feasible and entering the interior of $\gamma\Qn$; its rate then
becomes strictly smaller than $t_0$, unless the origin is already feasible.
Thus $t_H<t_0$, and $t_H = 0$ when $b\le0$.

Assume $b>0$. Strict convexity gives a unique minimizer $y$, and the
halfspace constraint is active. Slater's condition holds, so the KKT
conditions give
$$
q\nu = \nabla\rate(y)+\eta
$$
for some $q\ge0$ and $\eta\in N_{\gamma\Qn}(y)$. Here $\eta_i = 0$ when
$|y_i|<\gamma$, while $\eta_iy_i\ge0$ when $|y_i| = \gamma$. Necessarily
$q>0$, since otherwise $y$ would minimize $\rate$ on $\gamma\Qn$ and hence
would equal the origin. This proves \eqref{eq:KKT}.

Since $x$ is feasible, $q\nu\cdot(x-y)\ge0$. On every active coordinate,
$\eta_i(x_i-y_i)\le0$ because $x\in\gamma_0\Qn$. Hence
$\nabla\rate(y)\cdot(x-y)\ge0$. Since $\nabla^2\rate\succeq I_n$,
$$
\rate(x)\ge\rate(y)+\nabla\rate(y)\cdot(x-y) +\frac12\norm{x-y}_2^2,
$$
which proves \eqref{eq:contact-distance}. Finally,
$|x_i-y_i|\ge\gamma-\gamma_0$ for $i\in J$, giving
\eqref{eq:active-count}.
\end{proof}

For a penetrating facet visible from $u\in\Sigma$, put $x = x(u)$ and let
$t_H$ be its contact level. Define
\begin{align}\label{eq:depth}
s := t_{-} -t_H\ge0, \quad d := t_+-t_H = 2B+s.
\end{align}
If $b\le0$, put $m = 0$; otherwise let $m$ be the active-coordinate count in
Lemma~\ref{lem:projection}. Then
\begin{align}\label{eq:m-depth}
m\le C_{\rm act}d, \quad C_{\rm act} := \frac{2}{(\gamma-\gamma_0)^2}.
\end{align}
Associate with $H^+(\nu,b)$ the buffered cap
\begin{align}
C_H := \left\{z\in\Vn:\ip{\nu}{z}\ge b+\frac3n\right\}.
\end{align}

\begin{proposition}[Buffered cap]\label{prop:cap}
There are absolute constants $c,C,\delta>0$ such that, for all sufficiently
large $n$, the following holds under the assumptions of
Lemma~\ref{lem:projection}. If $b\le0$, put $m := 0$; if $b>0$, let $m$ be the
active-coordinate count from Lemma~\ref{lem:projection}. Then
\begin{align}\label{eq:cap-global}
\mu_n(C_H)\ge c\frac{e^{-t_H}}{\sqrt n (m+1)^{3/2}}.
\end{align}
Consequently, for a penetrating halfspace visible from $x(u)$,
\begin{align}\label{eq:cap-depth}
\mu_n(C_H)\ge c\frac{e^{-t_H}}{\sqrt n (1+d)^{3/2}}.
\end{align}
If, in addition,
$$
t_x\in[\alpha n/2,2\alpha n], \quad t_H\ge\alpha n/4, \quad d := t_x-t_H\le\delta n,
$$
then
\begin{align}\label{eq:cap-local}
\mu_n(C_H\cap\{|\ell_x|\le C(1+d)\}) \ge c\frac{e^{-t_H}}{\sqrt n (1+d)^{3/2}}.
\end{align}
\end{proposition}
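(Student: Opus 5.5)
The plan is to tilt to the contact point $y$ from Lemma~\ref{lem:projection} and apply the likelihood identity. The case $b\le0$ is trivial: with $t_H=0$ and $m=0$, we need $\mu_n\{\ip{\nu}{z}\ge 3/n\}\ge c/\sqrt n$. This follows from Berry--Esseen or \eqref{eq:MS} applied to $\ip{\nu}{X}$, and since $\norm{\nu}_\infty$ may be large, it is safer to use Montgomery-Smith together with symmetry: $\Prob\{\ip{\nu}{X}>0\}\ge c$ unless $\nu$ is concentrated on few coordinates, and in that case a direct count on those coordinates works.

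Assume now $b>0$. Let $a=q\nu$ be as in \eqref{eq:KKT}, and split the coordinates into $J$ (active, $|J|=m$) and $J^c$. Write $\ell_y(z)=\sum_{i\notin J}a_i(z_i-y_i)+\sum_{i\in J}\lambda(y_i)(z_i-y_i)$, and note that $\ip{a}{z}-qb=\sum_i a_i(z_i-y_i)$. On $J^c$ the tilt is exactly $a_i=\lambda(y_i)$. On $J$ we have $|a_i|\ge\lambda(\gamma)$ with the same sign as $y_i$. I would therefore restrict to the event $z_J=\operatorname{sign}(y_J)$. Under $\Prob_y$ this event has probability $((1+\gamma)/2)^m$, and the likelihood factor on $J$ is explicit: $\mu_J(z_J)=2^{-m}$, versus $e^{-\rate_J(y)-\sum_J\lambda(\gamma)(1-\gamma)}$. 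On this event the $J$-part of $\ip{a}{z-y}$ is positive, equal to $\sum_J|a_i|(1-\gamma)$. Hence it suffices to take $z_{J^c}$ with $T':=\sum_{i\notin J}\lambda(y_i)(z_i-y_i)\in[\,3q/n\,, \,3q/n+1]$, say. On that window the likelihood gives $\mu_{J^c}$-mass $\ge e^{-\rate_{J^c}(y)-O(1)-3q/n}\Prob_y\{T'\in\text{window}\}$. This naive route loses a factor $c^m$ instead of the claimed $(m+1)^{-3/2}$, so the active block must be handled more carefully. The cleaner approach is to not fix $z_J$, but to treat $S_J:=\sum_{J}a_i(z_i-y_i)$, whose positive part dominates its likelihood cost. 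Under the uniform measure on $\{-1,1\}^J$ the event $\{S_J\ge -w\}$ combined with the tilted $J^c$ part yields a convolution. The $(m+1)^{-3/2}$ then arises as a ballot/local-limit-type bound: the probability that a random walk of $m$ steps, weighted by $e^{-(\text{overshoot})}$, stays in the right region, by the same mechanism as the Bahadur--Rao factor $1/\sqrt{n}$ times $1/(m+1)$ from the boundary tilting. Concretely, I would choose a shifted tilt on $J$ (means $\pm\gamma'$ slightly below $\gamma$, so that the $J$-rate stays within $O(1)$ of $\rate_J(y)$ in total) and use local CLT on both blocks jointly.

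The $J^c$ variance is $\asymp n-m$, and $q\lambda$-weights are bounded since $y\in\gamma\Qn$, so Berry--Esseen as in \eqref{eq:interval-anti} gives the lower local bound $\Prob_y\{T'\in[u,u+1]\}\ge c/\sqrt n$ for $|u|\le C\sqrt n$. Here $3q/n$ is $O(1/\sqrt n)$ because $q\asymp\norm{\lambda(y)}_2\lesssim\sqrt n$; this is exactly why the buffer $3/n$ is harmless. Then \eqref{eq:cap-depth} follows from \eqref{eq:cap-global} and \eqref{eq:m-depth}.

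For \eqref{eq:cap-local}, I would restrict the above lower bound to the event where additionally $|\ell_x(z)|\le C(1+d)$. Write $\ell_x(z)-\ell_y(z)=\sum(\lambda(x_i)-\lambda(y_i))z_i+\text{const}$. By \eqref{eq:contact-distance} and Lipschitzness of $\lambda$ on $[-\gamma,\gamma]$, the random part has $\Prob_y$-standard deviation $\lesssim\norm{x-y}_2\lesssim\sqrt d$, while the constant is $\rate(x)-\rate(y)$ up to a bounded correction, i.e.\ $O(d)$, via the convexity identity $\rate(x)-\rate(y)-\nabla\rate(y)(x-y)$. Since $\ell_y$ is pinned to an $O(1+\sqrt d)$ window on our event, Chebyshev conditional on the window (using independence up to the local CLT via a two-dimensional Berry--Esseen, which is valid once $d\le\delta n$ keeps $m$ small and the covariance nondegenerate) keeps half the mass. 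The hypotheses $t_H\ge\alpha n/4$ and $t_x\asymp n$ ensure $y$ is not near $0$, so the variances are $\asymp n$. The main obstacle is obtaining the polynomial $(m+1)^{-3/2}$ rather than $c^m$ on the active block; I expect it to require a careful ladder/ballot estimate.
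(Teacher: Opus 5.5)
\textbf{Genuine gap: the active block $J$.} You flag this step yourself and do not resolve it. You are right that pinning $z_J=\operatorname{sign}(y_J)$ loses a factor $e^{-cm}$. That loss is fatal, because $m$ can be as large as $C_{\rm act}d$ with $C_{\rm act}=2/(\gamma-\gamma_0)^2$ huge.

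The substitute you sketch, a tilt on $J$ plus a joint local CLT, cannot be made uniform, for three reasons. By \eqref{eq:KKT} the weights $s_j=|a_j|$, $j\in J$, are constrained only in sign, so they may be arbitrarily large and unequal. Next, $m$ may be small. Finally, the likelihood ratio on $J$ depends on the unweighted sum $\sum_{j\in J}(\widetilde X_j-\gamma)$, while the cap event involves the weighted one.

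The missing idea is a cycle-lemma argument (the paper's Claim~1). Take $L(z)=\sum_{i=1}^m s_i(z_i-\gamma)$, $k=\lceil(1+\gamma)m/2\rceil$, and $\tau$ the cyclic shift. Every $z$ with at least $k$ plus signs satisfies $\sum_{r=0}^{m-1}L(\tau^rz)=(\sum_i s_i)(\sum_j(z_j-\gamma))\ge0$. Hence every orbit in this shift-invariant set meets $\{L\ge0\}$. Orbits have at most $m$ points, so $\mu_m\{L\ge0\}\ge m^{-1}2^{-m}\binom mk\ge c\,(m+1)^{-3/2}e^{-mI(\gamma)}$ by Stirling's formula, uniformly in the weights. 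The buffer $\frac3n\norm s_2$ is absorbed by running the same argument at $\gamma'=\gamma+3\norm s_2/(n\sum_is_i)\le\gamma+3/n$. This costs only $e^{m(I(\gamma')-I(\gamma))}=e^{O(1)}$, since $m\le n$.

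\textbf{Two further steps fail as written.} The first is the interior block. You assert $\Var_y(T')\asymp n-m$ and $q\lesssim\sqrt n$; neither holds in general.
\begin{itemize}
\item In fact $\Var_y(T')\asymp E_0=\sum_{i\notin J}I(y_i)$. Under the hypotheses of \eqref{eq:cap-global} this can be $O(1)$ or tiny: deep penetration allows $t_H$ near $0$ with $b>0$. There the Berry--Esseen error $C/\sigma$ is useless.
\item $q=\norm a_2$ contains the unbounded active weights, so a window at height $3q/n$ need not be near the mean.
\end{itemize}
The paper splits the buffer via $\norm{a_{I_0}}_2+\norm{a_J}_2\ge\norm a_2$ into two independent events. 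It uses Berry--Esseen only for large $E_0$, obtaining $c\,e^{-E_0}/\sqrt{E_0}$. For bounded $E_0$ it uses Montgomery--Smith, which is where the choice $c_{\rm MS}\lambda(\gamma)\le1/4$ enters.

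The second is the localization \eqref{eq:cap-local}. A two-dimensional Berry--Esseen bound has error of order $n^{-1/2}$. That is the same order as the probability of the window for $T_0$, so it says nothing about the conditional law of $W$ on that window. Moreover, the covariance degenerates when $\norm w_2$ is small. A sub-Gaussian tail bound alone would only give $|W|\lesssim\sqrt{(1+d)\log n}$, which is too weak for the shallow-shell entropy count.

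The paper instead tilts. Let $\Prob_\tau$ be the product law with natural parameters $a_i+\tau w_i$. Then for $\tau=\pm1$,
\[
\E_y[e^{\tau W}\mathbf 1_A]=e^{\Psi(\tau)}\Prob_\tau(A)\le Ce^{\rho/2}/\sqrt n,
\]
which needs only a one-dimensional Berry--Esseen upper bound under $\Prob_\tau$. Markov's inequality then removes $\{|W|>K(1+\rho)\}$ at cost $O(n^{-1/2}e^{-K})$.

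Your $b\le0$ case is fine. Montgomery--Smith applies directly because $\norm\nu_2/\norm\nu_\infty\ge1$, so no case split on the concentration of $\nu$ is needed.
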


We also need two standard empty-range estimates. Their short proofs are
included in Appendix~\ref{sec:sampling}.

\begin{lemma}[Empty-range estimates]\label{lem:empty-range}
\emph{(i)} Let $(\Omega,\mu)$ be finite, let $W\subset\Omega$ have measure
$0<q\le\bar q\le1$, and let $\mathcal R$ be a class of subsets of $W$ of
VC dimension at most $v\ge1$. Suppose $\mu(R)\ge p>0$ for every
$R\in\mathcal R$. If $Y_1,\dots,Y_N$ are independent with law $\mu$ and
$Np\ge16v$, then
\begin{align}
&\Prob\{\exists R\in\mathcal R: R\cap\{Y_1,\dots,Y_N\} = \varnothing\} \label{eq:conditional-net} \\
&\quad\le2e^{-Np/12} +2\exp\left[v\log\left(\frac{4eN\bar q}{v}\right)-c_{\rm VC}Np\right], \notag
\end{align}
where $c_{\rm VC}>0$ is absolute.

\emph{(ii)} Let $p>0$. The probability that there exist $\nu\in\Sn$ and
$b\in[-\sqrt n,\sqrt n]$ such that
$$
\ip{\nu}{X^{(j)}}\le b\quad(1\le j\le N), \quad \mu_n\left\{z:\ip{\nu}{z}\ge b+\frac3n\right\}\ge p
$$
is at most
\begin{align}\label{eq:halfspace-net}
Cn^{3/2}(9n^{3/2})^ne^{-Np}.
\end{align}
\end{lemma}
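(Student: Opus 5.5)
For part (i), the idea is to condition on how many samples fall in $W$ and then apply the classical $\varepsilon$-net theorem of Haussler and Welzl \cite{HW} to the conditional sample. For part (ii), I would use a plain discretization of normals and thresholds together with a union bound.

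\emph{Part (i).} Let $M := \#\{j:Y_j\in W\}$, so that $M\sim\mathrm{Bin}(N,q)$. Conditionally on $M$ and on the indices of the samples landing in $W$, those samples are i.i.d.\ with law $\mu(\cdot\mid W)$. Every $R\in\mathcal R$ satisfies $\mu(R\mid W)\ge\varepsilon:=p/q$. I would work on the good event
$$
Nq/2\le M\le 2N\bar q .
$$
By Chernoff bounds, $\Prob\{M<Nq/2\}\le e^{-Nq/8}\le e^{-Np/8}$, using $q\ge p$ (which holds because every range has measure at least $p$ and lies inside $W$). Since $\E M = Nq\le N\bar q$, we also have $\Prob\{M>2N\bar q\}\le e^{-N\bar q/3}\le e^{-Np/3}$. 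Both tails are absorbed into $2e^{-Np/12}$.

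On the good event, $\varepsilon M\ge Np/2\ge 8v$, so the $\varepsilon$-net theorem applies in its standard double-sampling form. It bounds the conditional probability that some range of conditional measure at least $\varepsilon$ is missed by
$$
2\,\pi_{\mathcal R}(2M)\,2^{-\varepsilon M/2},
$$
where $\pi_{\mathcal R}$ is the shatter function. By Sauer's lemma, $\pi_{\mathcal R}(2M)\le(2eM/v)^v$. Using $M\le 2N\bar q$ in the polynomial factor and $\varepsilon M\ge Np/2$ in the exponential factor gives
$$
2\exp\bigl[v\log(4eN\bar q/v)-c_{\rm VC}Np\bigr]
$$
with $c_{\rm VC}=(\log 2)/4$. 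Averaging over $M$ then gives \eqref{eq:conditional-net}. The only care needed is that the shatter bound is monotone in $M$, so the upper cutoff on $M$ is legitimate.

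\emph{Part (ii).} Fix a $n^{-3/2}$-net $\mathcal N_1$ of $\Sn$ with $|\mathcal N_1|\le(3n^{3/2})^n\le(9n^{3/2})^n$. Fix also the grid $\mathcal N_2:=\frac1n\mathbb Z\cap[-\sqrt n-1,\sqrt n+3]$, which has $|\mathcal N_2|\le Cn^{3/2}$.

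Given a bad pair $(\nu,b)$, choose $\nu'\in\mathcal N_1$ with $\norm{\nu-\nu'}_2\le n^{-3/2}$. Since $\norm z_2=\sqrt n$ on the cube, this gives $|\ip{\nu-\nu'}{z}|\le 1/n$ for every $z\in\Vn$. Next choose $b'\in\mathcal N_2\cap(b+1/n,\,b+2/n]$. Then every sample satisfies
$$
\ip{\nu'}{X^{(j)}}\le b+1/n<b'.
$$
Moreover, any $z$ with $\ip{\nu}{z}\ge b+3/n$ has $\ip{\nu'}{z}\ge b+2/n\ge b'$. Hence the fixed set $\{z:\ip{\nu'}{z}\ge b'\}$ has $\mu_n$-measure at least $p$ and contains no sample point. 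For each fixed pair $(\nu',b')$ this event has probability at most $(1-p)^N\le e^{-Np}$. A union bound over $\mathcal N_1\times\mathcal N_2$ gives \eqref{eq:halfspace-net}.

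Both parts are routine. The only delicate point is in (i): one must use the lower cutoff on $M$ to turn $\varepsilon M$ into order $Np$, and the upper cutoff (with $\bar q$ rather than $q$) in the entropy factor.
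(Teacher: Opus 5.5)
Your proposal is correct and matches the paper's proof essentially step for step. In (i) the paper also conditions on the number $M$ of samples in $W$ and discards $M\notin[Nq/2,2Nq]$ by Chernoff; it then writes out inline the Haussler--Welzl double-sampling argument with Sauer's lemma that you cite as a black box. In (ii) it uses the same net-plus-threshold-grid union bound, with a $1/(4n^{3/2})$-net and a slightly different rounding of $b$ than yours, and both versions work.
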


\begin{proof}[Proof of Proposition~\ref{prop:visibility}]
Fix $u\in\Sigma$ and put $x = x(u)$. A penetrating facet visible from $u$
has an outer halfspace $H^+(\nu,b)$ whose buffered cap $C_H$ contains no
sample point. By \eqref{eq:levels} and \eqref{eq:depth},
\begin{align}
t_H = \alpha n-\frac32\log n-B-s, \quad d = 2B+s.
\end{align}
Set $K_n := \lceil2\log\log n\rceil$.

Suppose first that $0\le s<K_n$. For an integer $0\le k<K_n$, consider
the shell $k\le s<k+1$ and put
$$
A_k := 2B+k+2, \quad L_k := C_1A_k, \quad W_k := \{z\in\Vn:|\ell_x(z)|\le L_k\},
$$
where $C_1\ge1$ is at least the constant in \eqref{eq:cap-local}.
Uniformly in
these shells, $L_k = o(\sqrt n)$, $d = o(n)$, and $t_H\ge\alpha n/4$ for all
large $n$. Proposition~\ref{prop:cap} and
Proposition~\ref{prop:outer} therefore give
\begin{align}
\mu_n(C_H\cap W_k)&\ge p_k, &p_k& := c_1\frac{e^{-t_{-}+k}}{\sqrt n A_k^{3/2}}, \\
\mu_n(W_k)&\le\bar q_k, &\bar q_k& := C_1'\frac{A_ke^{C_1A_k-t_+}}{\sqrt n}.
\end{align}
Here $t_H = t_{-} -s$, $s\ge k$, and $1+d\le A_k$. Also
$\bar q_k<1$ for all large $n$, uniformly in $k<K_n$, because
$A_k = O(B+\log\log n)$ and
$$
\log\bar q_k\le-\alpha n+\log n+O(B+\log\log n).
$$

For the remainder of the shallow-shell argument, $u$ and $k$ are fixed;
therefore $x$, $W_k$, and the range class below are deterministic. Let
$$
\mathcal R_k := \{W_k\cap H:H\text{ is an affine halfspace and } \mu_n(W_k\cap H)\ge p_k\}.
$$
A penetrating facet in the shell produces an empty member of
$\mathcal R_k$, namely
$$
C_H\cap W_k = W_k\cap\left\{z:\ip{\nu}{z}\ge b+\frac3n\right\}.
$$
Since affine halfspaces in $\R^n$ have VC dimension $n+1$,
$\operatorname{VCdim}(\mathcal R_k)\le n+1$. From
$N\ge\frac12e^{\alpha n}$ and the definitions of $t_\pm$,
\begin{align}
Np_k&\ge a_0n\frac{e^{B+k}}{A_k^{3/2}}, \\
N\bar q_k&\le C_2nA_ke^{C_1A_k-B}.
\end{align}
Consequently,
\begin{align}
\log\left(\frac{4eN\bar q_k}{n+1}\right)\le C_3A_k.
\end{align}
Choose $B$ sufficiently large that, for every $k\ge0$,
\begin{align}\label{eq:B-choice}
a_0\frac{e^{B+k}}{A_k^{3/2}}\ge32, \quad c_{\rm VC}a_0\frac{e^{B+k}}{A_k^{3/2}}\ge4C_3A_k.
\end{align}
This is possible because $e^{B+k}/(2B+k+2)^{5/2}$ is increasing in $k$
for large $B$, and its value at $k = 0$ tends to infinity with $B$.
Let $\mathcal A_k(u)$ be the event that $\mathcal D$ occurs and a
penetrating facet in shell $k$ is visible from $u$. If
$\mathcal R_k\ne\varnothing$, then
$p_k\le\mu_n(W_k)\le\bar q_k$; otherwise $\mathcal A_k(u)$ is empty.
Thus Lemma~\ref{lem:empty-range}(i), with $v = n+1$, gives
$$
\Prob(\mathcal A_k(u)) \le4\exp\left[-a_1n\frac{e^{B+k}}{A_k^{3/2}}\right].
$$
Let $\mathcal A_{\rm sh}(u) := \bigcup_{k<K_n}\mathcal A_k(u)$. Summing
over $k<K_n$ gives
\begin{align}\label{eq:shallow}
\Prob(\mathcal A_{\rm sh}(u)) \le\exp\left[-a_2n\frac{e^B}{B^{3/2}}\right].
\end{align}

Now suppose $s\ge K_n$. By \eqref{eq:cap-depth},
$$
N\mu_n(C_H) \ge cn\frac{e^{B+s}}{(1+B+s)^{3/2}}.
$$
The right-hand side is increasing in $s$ once $B$ is large. Since
$e^{K_n}\ge(\log n)^2$,
\begin{align}
N\mu_n(C_H) \ge\Lambda_n := cn\frac{e^B(\log n)^2}{(\log\log n)^{3/2}}.
\end{align}
Let $\mathcal A_{\rm deep}(u)$ be the event that $\mathcal D$ occurs
and a deep penetrating facet is visible from $u$. For a facet halfspace,
$|b|\le\norm\nu_1\le\sqrt n$. Applying
Lemma~\ref{lem:empty-range}(ii) with $p = \Lambda_n/N$ gives
$$
\Prob(\mathcal A_{\rm deep}(u)) \le Cn^{3/2}(9n^{3/2})^ne^{-\Lambda_n}.
$$
Since the logarithm of the prefactor is
$\frac32n\log n+O(n)$ while $\Lambda_n\gg n\log n$,
\begin{align}\label{eq:deep}
\Prob(\mathcal A_{\rm deep}(u)) \le\exp\left[-cn\frac{(\log n)^2}{(\log\log n)^{3/2}}\right].
\end{align}

The constant $a_2$ in \eqref{eq:shallow} is absolute and independent of
$B$. Increase $B$, if necessary before fixing it, so that in addition to
\eqref{eq:B-choice},
$$
\frac{a_2}{4}\frac{e^B}{B^{3/2}}>C_{\rm out}e^{-B}.
$$
For all sufficiently large $n$, the deep bound \eqref{eq:deep} is at most
$$
\exp\left[-\frac{a_2}{2}n\frac{e^B}{B^{3/2}}\right].
$$
Combining the shallow and deep estimates gives
$$
\Prob\{u\in\Omega_{\rm pen}, \mathcal D\} \le 2\exp\left[-\frac{a_2}{2}n\frac{e^B}{B^{3/2}}\right].
$$
For all sufficiently large $n$,
$$
2\exp\left[-\frac{a_2}{2}n\frac{e^B}{B^{3/2}}\right] \le \exp\left[-\frac{a_2}{4}n\frac{e^B}{B^{3/2}}\right].
$$
Thus \eqref{eq:visibility} holds with
$$
c_{\rm pen} := \frac{a_2}{4}\frac{e^B}{B^{3/2}} >C_{\rm out}e^{-B}.
$$
\end{proof}

\section{Proof of the buffered-cap estimate}\label{sec:cap-proof}

We prove Proposition~\ref{prop:cap}. The argument separates the coordinates
at the rate projection into an interior block, where the KKT normal equals
the natural parameter, and an active block, where the weights are arbitrary
but all coordinates are pinned at $\pm\gamma$.

\paragraph{Claim 1: active coordinates.}
For every $\bar\gamma\in(0,1)$ there is $c(\bar\gamma)>0$ such that, for
all $m\ge1$, $q\in[0,\bar\gamma]$, and positive weights
$s_1,\dots,s_m$,
\begin{align}\label{eq:active-tail}
\mu_m\left\{\sum_{i = 1}^m s_i(X_i-q)\ge0\right\} \ge c(\bar\gamma)(m+1)^{-3/2}e^{-mI(q)}.
\end{align}

\begin{proof}
Let $\tau$ be cyclic shift on $\{-1,1\}^m$ and put
$L(z) := \sum_i s_i(z_i-q)$. Set
$$
k := \left\lceil\frac{1+q}{2}m\right\rceil, \quad \mathcal A := \{z:|\{i:z_i = 1\}|\ge k\}.
$$
If $z\in\mathcal A$, then
$$
\sum_{r = 0}^{m-1}L(\tau^rz) = \left(\sum_i s_i\right)\left(\sum_j(z_j-q)\right)\ge0.
$$
Thus at least one point of every cyclic orbit meeting $\mathcal A$ satisfies
$L\ge0$, and every orbit has at most $m$ points. Hence
$$
\mu_m\{L\ge0\} \ge\frac1m\mu_m(\mathcal A) \ge\frac1m2^{-m}\binom{m}{k}.
$$
For all sufficiently large $m$, uniformly in $q\in[0,\bar\gamma]$,
Stirling's formula gives
$$
2^{-m}\binom{m}{k} \ge c_{\bar\gamma}m^{-1/2}e^{-mI(q_m)}, \quad q_m := \frac{2k}{m}-1.
$$
Since $0\le q_m-q<2/m$ and $I' = \lambda$ is bounded on a fixed interval
containing $[0,\bar\gamma]$, one has
$m(I(q_m)-I(q))\le C_{\bar\gamma}$. This proves
\eqref{eq:active-tail} for large $m$. The finitely many remaining values
are absorbed into the constant, using the all-positive sign vector.
\end{proof}

\paragraph{Claim 2: shifted block estimates.}
For each fixed $L>0$ there are $c_L>0$ and $n_L$ such that, for
$n\ge n_L$, the following hold.

If $0\le r\le n$, $y\in(-\gamma,\gamma)^r$,
$a_i = \lambda(y_i)$, and $E = \sum_iI(y_i)$, then
\begin{align}\label{eq:shifted-interior}
\mu_r\left\{\sum_{i = 1}^r a_i(X_i-y_i) \ge\frac{L}{n}\norm a_2\right\} \ge c_L\frac{e^{-E}}{\sqrt{1+E}}.
\end{align}
If $0\le m\le n$ and $s_1,\dots,s_m>0$, then
\begin{align}\label{eq:shifted-active}
\mu_m\left\{\sum_{i = 1}^m s_i(X_i-\gamma) \ge\frac{L}{n}\norm s_2\right\} \ge c_L(m+1)^{-3/2}e^{-mI(\gamma)}.
\end{align}
The assertions are interpreted as probability one for an empty block.

\begin{proof}
We first prove \eqref{eq:shifted-interior}. After changing coordinate signs,
assume $y_i,a_i\ge0$. The assertion is immediate if $a = 0$. Under
$\Prob_y$, let
$$
T := \sum_i a_i(X_i-y_i), \quad \sigma^2 := \Var_y(T) = \sum_i a_i^2(1-y_i^2).
$$
On $[-\gamma,\gamma]$,
$I(y)$ and $\lambda(y)^2(1-y^2)$ are comparable, so
\begin{align}\label{eq:energy-variance}
c_\gamma E\le\sigma^2\le C_\gamma E.
\end{align}
Also
$$
\sum_i\E_y|a_i(X_i-y_i)|^3 \le2\lambda(\gamma)\sigma^2.
$$
Let $\ell := (L/n)\norm a_2$. Since $r\le n$, one has
$0\le\ell\le L\lambda(\gamma)/\sqrt n\le1$ for large $n$.
Berry-Esseen and \eqref{eq:energy-variance} give a constant
$C_\gamma$ such that the approximation error is at most
$C_\gamma/\sigma$. Choose a fixed $A>0$ so large that the Gaussian mass of
an interval of normalized length $A/\sigma$ near the origin dominates twice
this error, and then choose $E_*$ so large that $E\ge E_*$ implies
$\sigma\ge2(A+1)$. Since $0\le\ell\le1$, for $E\ge E_*$ we obtain
$$
\Prob_y\{\ell\le T\le\ell+A\}\ge\frac c\sigma.
$$
The likelihood identity then yields
$$
\mu_r\{T\ge\ell\} \ge e^{-E-\ell-A}\Prob_y\{\ell\le T\le\ell+A\} \ge c\frac{e^{-E}}{\sqrt E}.
$$

It remains to treat $0<E<E_*$. The desired event is
$\sum_i a_iX_i\ge h$, where
$$
h := \sum_i a_iy_i+\frac{L}{n}\norm a_2 \le\norm a_2^2+\frac{L}{n}\norm a_2
$$
because $0\le y_i\le a_i$. Put
$$
R := \frac{\norm a_2}{\norm a_\infty}, \quad \tau := c_{\rm MS}\frac h{\norm a_2}.
$$
By \eqref{eq:gamma-choice},
$$
\frac\tau R \le c_{\rm MS}\norm a_\infty+\frac{c_{\rm MS}L}{n}<1
$$
for large $n$, and
$\tau^2\le C(\norm a_2^2+n^{-2})\le C(E+n^{-2})$.
Since $c_{\rm MS}^{-1}\tau\norm a_2 = h$, the
Montgomery-Smith estimate \eqref{eq:MS} gives a positive lower bound
depending only on $L$ and $\gamma$. This proves
\eqref{eq:shifted-interior} after adjusting $c_L$.

For \eqref{eq:shifted-active}, assume $m\ge1$, put
$\ell = (L/n)\norm s_2$, and define
$$
\gamma' := \gamma+\frac{\ell}{\sum_i s_i}.
$$
Since $\norm s_2\le\sum_i s_i$, one has
$0\le\gamma'-\gamma\le L/n$. For a fixed
$\bar\gamma\in(\gamma,1)$ and all large $n$, Claim~1 gives
$$
\mu_m\left\{\sum_i s_i(X_i-\gamma)\ge\ell\right\} \ge c(m+1)^{-3/2}e^{-mI(\gamma')}.
$$
Because $m\le n$ and $I' = \lambda$,
$m(I(\gamma')-I(\gamma))\le L\lambda(\bar\gamma)$, which proves
\eqref{eq:shifted-active}.
\end{proof}

\paragraph{Claim 3: joint likelihood localization.}
Fix $\kappa>0$. There are constants $L_0,K,c>0$, depending only on
$\gamma$ and $\kappa$, such that the following holds for all sufficiently
large $n$. Let $r\le n$, $x,y\in(-\gamma,\gamma)^r$, and put
$$
a_i := \lambda(y_i),\quad w_i := \lambda(x_i)-\lambda(y_i),
$$
$$
T := \sum_i a_i(X_i-y_i),\quad W := \sum_i w_i(X_i-y_i).
$$
If
$$
\sum_iI(y_i)\ge\kappa n, \quad \rho\ge0, \quad \norm w_2^2\le\rho, \quad 0\le\ell\le1,
$$
then
\begin{align}\label{eq:joint}
\Prob_y\{\ell\le T\le\ell+L_0, |W|\le K(1+\rho)\} \ge\frac c{\sqrt n}.
\end{align}

\begin{proof}
Since $|\lambda(y_i)|\le\lambda(\gamma)$ and
$I(\tanh t)\asymp_\gamma t^2$ on this interval, the energy assumption gives
$$
c_{\gamma,\kappa}n\le\sum_i a_i^2\le C_\gamma n.
$$
Under $\Prob_y$, the centered variable $T$ has variance comparable to $n$
and total third absolute moment $O(n)$. Berry-Esseen therefore gives, for
$L_0$ sufficiently large,
\begin{align}\label{eq:T-window}
\Prob_y\{\ell\le T\le\ell+L_0\}\ge\frac{c_0}{\sqrt n}.
\end{align}

For $\tau\in\{-1,1\}$, let $\Prob_\tau$ be the product law with natural
parameters $a_i+\tau w_i$, and let
$m_{i,\tau} := \tanh(a_i+\tau w_i)$. For $\tau = 1$ these parameters are
$\lambda(x_i)$, while for $\tau = -1$ they are
$2\lambda(y_i)-\lambda(x_i)$; hence their absolute values are at most
$3\lambda(\gamma)$. Under $\Prob_\tau$,
$$
\Var_\tau(T) = \sum_i a_i^2(1-m_{i,\tau}^2)\asymp n, \quad \sum_i\E_\tau|a_i(X_i-m_{i,\tau})|^3 = O(n).
$$
Berry-Esseen under $\Prob_\tau$ therefore shows that every interval $J$
of length $L_0$ satisfies
\begin{align}\label{eq:secondary-window}
\Prob_\tau\{T\in J\}\le\frac C{\sqrt n}.
\end{align}
Taylor's theorem for $\log\cosh$, whose second derivative is at most one,
gives
$$
\Psi(\tau) := \sum_i( \log\cosh(a_i+\tau w_i)-\log\cosh a_i -\tau w_i\tanh a_i) \le\frac12\norm w_2^2\le\frac\rho2.
$$
For $A := \{\ell\le T\le\ell+L_0\}$, exponential tilting and
\eqref{eq:secondary-window} yield
$$
\E_y[e^{\tau W}\mathbf1_A] = e^{\Psi(\tau)}\Prob_\tau(A) \le\frac C{\sqrt n}e^{\rho/2}.
$$
Markov's inequality for both signs of $\tau$ gives
$$
\Prob_y\{A,|W|>K(1+\rho)\} \le\frac{2C}{\sqrt n}e^{-K-(K-1/2)\rho}.
$$
Choosing $K$ large enough and using \eqref{eq:T-window} proves
\eqref{eq:joint}.
\end{proof}

\begin{proof}[Proof of Proposition~\ref{prop:cap}]
If $b\le0$, then $t_H = 0$ and $m = 0$. For
$S := \ip{\nu}{X}$, symmetry gives $\E S^2 = 1$, while independence gives
$\E S^4\le3$. Paley-Zygmund applied to $S^2$ yields
$\Prob\{S\ge1/2\}\ge3/32$. For all large $n$, this event lies in $C_H$,
which proves \eqref{eq:cap-global} in this case.

Assume $b>0$, and let $y,J,m,a$ be given by
Lemma~\ref{lem:projection}. Put
$$
I_0 := [n]\setminus J, \quad E_0 := \sum_{i\in I_0}I(y_i).
$$
For each coordinate, let $\varepsilon_i := \operatorname{sgn}(y_i)$, with
$\operatorname{sgn}(0) := 1$, and put $\widetilde X_i := \varepsilon_iX_i$.
The variables $\widetilde X_i$ are again independent uniform signs. On
$I_0$, the KKT relation gives
$$
a_i(X_i-y_i) = |a_i|(\widetilde X_i-|y_i|),
$$
whereas on $J$ one has $y_j = \varepsilon_j\gamma$ and
$$
a_j(X_j-y_j) = |a_j|(\widetilde X_j-\gamma).
$$
Since $a = q\nu$, $a\cdot y = qb$, and $\norm a_2 = q$, the cap event is
$$
C_H = \left\{a\cdot(X-y)\ge\frac3n\norm a_2\right\}.
$$
It contains the intersection of the two independent events
\begin{align*}
\sum_{i\in I_0}|a_i|(\widetilde X_i-|y_i|) &\ge\frac3n\norm{a_{I_0}}_2, \\
\sum_{j\in J}|a_j|(\widetilde X_j-\gamma) &\ge\frac3n\norm{a_J}_2,
\end{align*}
because
$\norm{a_{I_0}}_2+\norm{a_J}_2\ge\norm a_2$.
Claim~2, with $L = 3$, gives
$$
\mu_n(C_H) \ge c\frac{e^{-E_0}}{\sqrt{1+E_0}} (m+1)^{-3/2}e^{-mI(\gamma)}.
$$
Since $E_0+mI(\gamma) = \rate(y) = t_H$ and
$1+E_0\le1+n\log2\le Cn$, this proves \eqref{eq:cap-global}.
Estimate \eqref{eq:cap-depth} follows from \eqref{eq:m-depth}.

It remains to prove the localized estimate. The assumptions imply $b>0$.
Choose $\delta>0$ so small that
$C_{\rm act}\delta I(\gamma)\le\alpha/8$. Then, by
\eqref{eq:active-count},
\begin{align}
E_0 = t_H-mI(\gamma)\ge\frac\alpha8n.
\end{align}
On $I_0$, put
$$
a_i := \lambda(y_i), \quad w_i := \lambda(x_i)-\lambda(y_i),
$$
$$
T_0 := \sum_{i\in I_0}a_i(X_i-y_i), \quad W_0 := \sum_{i\in I_0}w_i(X_i-y_i).
$$
Since $\lambda'$ is bounded on $[-\gamma,\gamma]$,
\eqref{eq:contact-distance} gives
\begin{align}
\norm{w_{I_0}}_2^2\le C\norm{x-y}_2^2\le Cd.
\end{align}
Since $\norm{a_{I_0}}_2\le\lambda(\gamma)\sqrt n$, for all sufficiently
large $n$ the number $\ell_0 := 3\norm{a_{I_0}}_2/n$ belongs to $[0,1]$.
Applying Claim~3 with
$\kappa = \alpha/8$ and $\rho = Cd$, we obtain an event of
$\Prob_y^{I_0}$-measure at least $c/\sqrt n$ on which
\begin{align}\label{eq:local-event}
\ell_0\le T_0\le\ell_0+L_0, \quad |W_0|\le C(1+d).
\end{align}
On this event $T_0\le1+L_0$, so the likelihood identity shows that its
$\mu_{I_0}$-measure is at least $ce^{-E_0}/\sqrt n$. By
\eqref{eq:shifted-active}, the independent active block satisfies
$$
\mu_J\left\{\sum_{j\in J}a_j(X_j-y_j) \ge\frac3n\norm{a_J}_2\right\} \ge c(m+1)^{-3/2}e^{-mI(\gamma)}.
$$
Intersecting these two events and using $m\le C_{\rm act}d$ produces a
subset of $C_H$ of measure at least
\begin{align}\label{eq:local-mass}
c\frac{e^{-t_H}}{\sqrt n (1+d)^{3/2}}.
\end{align}

We finally locate this subset in the outer likelihood window. The Bregman
divergence
$$
D_{\rate}(y,x) := \rate(y)-\rate(x) -\nabla\rate(x)\cdot(y-x)
$$
satisfies, because $x,y\in\gamma\Qn$,
$$
0\le D_{\rate}(y,x) \le\frac1{2(1-\gamma^2)}\norm{x-y}_2^2\le Cd.
$$
Since $\rate(x)-\rate(y) = d$,
\begin{align}\label{eq:contact-score}
|\nabla\rate(x)\cdot(y-x)| = d+D_{\rate}(y,x)\le C(1+d).
\end{align}
On the interior block, \eqref{eq:local-event} gives
$$
\sum_{i\in I_0}\lambda(x_i)(X_i-y_i) = T_0+W_0 = O(1+d).
$$
On the active block,
$|\lambda(x_j)|\le\lambda(\gamma_0)$ and $m\le C_{\rm act}d$, so
$$
\left|\sum_{j\in J}\lambda(x_j)(X_j-y_j)\right|\le C(1+d).
$$
Together with \eqref{eq:contact-score}, this yields
$|\ell_x(X)|\le C(1+d)$ on the set of mass
\eqref{eq:local-mass}, proving \eqref{eq:cap-local}.
\end{proof}

\appendix
\section{Sampling estimates}\label{sec:sampling}

\begin{proof}[Proof of Lemma~\ref{lem:empty-range}(i)]
If $\mathcal R = \varnothing$, there is nothing to prove. Otherwise $p\le q$.
Let
$$
M := |\{j:Y_j\in W\}|.
$$
Then $M$ is binomial with mean $Nq$, and Chernoff's inequality gives
$$
\Prob\{M\notin[Nq/2,2Nq]\} \le2e^{-Nq/12}\le2e^{-Np/12}.
$$
Condition on $M = m\in[Nq/2,2Nq]$ and on the set of indices for which
$Y_j\in W$. The corresponding observations are independent with common law
$\nu := \mu(\cdot\mid W)$. Every $R\in\mathcal R$ has
$\nu(R)\ge\varepsilon := p/q$, and
$\varepsilon m\ge Np/2\ge8v$.

Because $\Omega$ is finite, $\mathcal R$ has only finitely many distinct
members; fix an ordering of them. Let $S$ be this conditional sample and
$S'$ an independent sample of size $m$ from $\nu$. If $S$ misses a range
of $\nu$-measure at least $\varepsilon$, choose the first such range in
that ordering. Conditional
on $S$, Chernoff's inequality shows that $S'$ contains at least
$\varepsilon m/2$ points of that range with probability greater than $1/2$.
Thus
$$
\Prob\{S\text{ misses a heavy range}\} \le2\Prob\{\exists R:S\cap R = \varnothing, |S'\cap R|\ge\varepsilon m/2\}.
$$
Pool the two indexed samples. Conditional on the pooled sample, a fixed trace
of size $r\ge\varepsilon m/2$ lies entirely in the ghost half with
probability zero if $r>m$; if $r\le m$, that probability is at most
$$
\frac{(m)_r}{(2m)_r}\le2^{-r}\le e^{-c\varepsilon m}.
$$
By Sauer's lemma, the number of traces on the pooled $2m$ observations is at
most
$$
\left(\frac{2em}{v}\right)^v \le\left(\frac{4eN\bar q}{v}\right)^v.
$$
A union bound, followed by the exceptional probability for $M$, proves
\eqref{eq:conditional-net}.
\end{proof}

\begin{proof}[Proof of Lemma~\ref{lem:empty-range}(ii)]
Choose a Euclidean $1/(4n^{3/2})$-net $\cN\subset\Sn$ with
$|\cN|\le(9n^{3/2})^n$, and discretize $b$ in steps of $1/n$. If
$\norm{\nu-\nu'}_2\le1/(4n^{3/2})$ and
$b\in[j/n,(j+1)/n)$, then for every $z\in\Vn$,
$$
\left\{\ip{\nu}{z}\ge b+\frac3n\right\} \subset \left\{\ip{\nu'}{z}\ge\frac{j+2}{n}\right\} \subset \{\ip{\nu}{z}>b\}.
$$
Thus an empty buffered cap of measure at least $p$ yields an empty member of
a discretized family of at most
$Cn^{3/2}(9n^{3/2})^n$ halfspaces, each of measure at least $p$. The
emptiness probability for each fixed member is at most $e^{-Np}$. A union
bound proves \eqref{eq:halfspace-net}.
\end{proof}

\section*{Statements and Declarations}

The author has no competing interests to declare that are relevant to the content of this article. Data sharing is not applicable to this article as no datasets were generated or analysed during the current study. No funding was received for conducting this study.


\begin{thebibliography}{99}

\bibitem{BP}
I.~B\'ar\'any and A.~P\'or,
\emph{On $0$-$1$ polytopes with many facets},
Adv. Math. \textbf{161} (2001), no.~2, 209-228.

\bibitem{CF26}
F.~Castillo and L.~Ferroni,
\emph{Almost factorial many facets for $0/1$-polytopes},
arXiv:2608.27247 (2026).

\bibitem{DFM}
M.~E. Dyer, Z.~F\"uredi, and C.~McDiarmid,
\emph{Volumes spanned by random points in the hypercube},
Random Structures Algorithms \textbf{3} (1992), no.~1, 91-106.

\bibitem{FKR}
T.~Fleiner, V.~Kaibel, and G.~Rote,
\emph{Upper bounds on the maximal number of facets of $0/1$-polytopes},
European J. Combin. \textbf{21} (2000), no.~1, 121-130.

\bibitem{GGM05}
D.~Gatzouras, A.~Giannopoulos, and N.~Markoulakis,
\emph{Lower bound for the maximal number of facets of a $0/1$ polytope},
Discrete Comput. Geom. \textbf{34} (2005), no.~2, 331-349.

\bibitem{GGM07}
D.~Gatzouras, A.~Giannopoulos, and N.~Markoulakis,
\emph{On the maximal number of facets of $0/1$ polytopes},
in \emph{Geometric Aspects of Functional Analysis}, Lecture Notes in Math.
\textbf{1910}, Springer, Berlin, 2007, 117-125.

\bibitem{HW}
D.~Haussler and E.~Welzl,
\emph{$\varepsilon$-nets and simplex range queries},
Discrete Comput. Geom. \textbf{2} (1987), 127-151.

\bibitem{MS}
S.~J. Montgomery-Smith,
\emph{The distribution of Rademacher sums},
Proc. Amer. Math. Soc. \textbf{109} (1990), no.~2, 517-522.

\bibitem{Petrov}
V.~V. Petrov,
\emph{Limit Theorems of Probability Theory: Sequences of Independent
Random Variables},
Oxford Studies in Probability, vol.~4, Clarendon Press, Oxford, 1995.

\bibitem{Ziegler}
G.~M. Ziegler,
\emph{Lectures on $0/1$-polytopes},
in \emph{Polytopes-Combinatorics and Computation},
DMV Sem., vol.~29, Birkh\"auser, Basel, 2000, 1-41.

\end{thebibliography}
\end{document}